\documentclass[reqno]{amsart}

\usepackage{amsfonts}
\usepackage{amssymb}
\usepackage{amsmath}
\usepackage{enumitem}
\usepackage{xcolor}
\usepackage{todonotes}

\setenumerate{label={\rm (\alph{*})}}
\usepackage{bbm,bm,euscript,mathrsfs}
\usepackage{paper_diening}
\usepackage{graphicx}
\usepackage[top=1in, bottom=1.25in, left=1.10in, right=1.10in]{geometry}
\usepackage{hyperref}
\usepackage{cases, color,amsthm}
\usepackage{upref}
\hypersetup{linkcolor=blue, colorlinks=true,citecolor = red}

\allowdisplaybreaks

\numberwithin{equation}{section}

\usepackage{tikz-cd}
\usetikzlibrary{lindenmayersystems}
\usetikzlibrary{decorations.pathreplacing}

\DeclareMathOperator{\Div}{div}

\newcommand{\R}{\mathbb R}
\newcommand{\N}{\mathbb N}
\newcommand{\dd}{\mathrm d}
\newcommand{\dt}{\,\mathrm{d} t}

\newcommand{\vr}{\varrho}
\newcommand{\vt}{\vartheta}


\newcommand{\bu}{\mathbf{u}}

\newcommand{\bn}{\mathbf{n}}
\newcommand{\be}{\mathbf{e}}

\newcommand{\dx}{\, \mathrm{d} {x}}

\newcommand{\nabx}{\nabla }

\newcommand{\Delx}{\Delta }
\newcommand{\Dely}{\Delta_{\mathbf{y}}}

\newcommand{\dH}{\,\mathrm{d}y}
\newcommand{\dxt}{\,\mathrm{d}x\,\mathrm{d}t}
\newcommand{\ds}{\,\mathrm{d}s}

\newcommand{\Oeta}{\Omega_\eta}

\newtheorem{theorem}{Theorem}[section]

\newtheorem{proposition}[theorem]{Proposition}
\newtheorem{corollary}[theorem]{Corollary}
\newtheorem{remark}[theorem]{Remark}

\theoremstyle{definition}
\newtheorem{definition}[theorem]{Definition}

\newcommand{\seb}[1]{\textcolor[rgb]{0.00,0.60,0.20}{  #1}}
\begin{document}

\title[No contact in compressible FSI]{Compressible fluids and elastic plates in 2D: a conditional no-contact theorem}

\author{Dominic Breit}
\address{Institute of Mathematics, TU Clausthal, Erzstra\ss e 1, 38678 Clausthal-Zellerfeld, Germany}
\email{dominic.breit@tu-clausthal.de}
%

\author{Arnab Roy}
\address{Basque Center for Applied Mathematics (BCAM), Alameda de Mazarredo 14, 48009 Bilbao, Spain.}
\email{aroy@bcamath.org}



\subjclass[2020]{35B65, 35Q74, 35R37, 74F10, 74K25}

\date{\today}


\keywords{Compressible Navier-Stokes system, elastic plate equation, Fluid-Structure interaction, Strong solutions, Collision.}

\begin{abstract}

We consider the interaction of a compressible fluid with a flexible plate in two space dimensions. The fluid is described by the Navier--Stokes equations in a domain that is changing in accordance with the motion of the structure. The displacement of the latter evolves according to a beam equation. Both are coupled through kinematic boundary conditions and the balance of forces.\\
We prove that for any weak solution to the coupled system, which satisfies certain additional regularity requirements, no contact occurs between the elastic wall and the bottom of the fluid cavity. This applies to both isentropic and heat-conducting fluids.
As a special case of our general theory we extend
the unconditional result from Grandmont and Hillairet (Arch.
Ration. Mech. Anal. 220, 1283--1333, 2016)
on incompressible fluids from visco-elastic to perfectly elastic plates.
\end{abstract}

\maketitle

\section{Introduction}
The interaction of fluids with elastic structures has many applications 
in various fields of applied science such as
 hydro- and aero-elasticity \cite{Do}, bio-mechanics \cite{BGN} and hydrodynamics
\cite{Su}. As a consequence there has been a huge effort from engineers, physicists and mathematicians in studying these complex processes.
In the last two decades the mathematical theory has advanced significantly, still many open questions remain.

We are interested in the case where the elastic structure occupies a part of the boundary of the two-dimensional fluid domain.
To simplify the set-up, we consider the rectangular domain $\Omega= (0,L)\times (0,1)$ with $L>0$. The elastic structure is located on $\omega=(0,L)\times \{1\}$, the upper part of the boundary of $\Omega$, see figure \ref{fig:1}. We set $I=(0,T)$ for some $T>0$. Let $\widehat{\eta}(t,x)$ be the structural displacement, the fluid domain $\Omega_{\eta}\subset \mathbb{R}^2$  is given by:
\begin{equation*}
 \Omega_{\eta} = \{(x,y)\in \mathbb{R}^2 \mid x\in (0,L),\ y\in (0,\eta(t,x))\},
\end{equation*}
where $\eta(t,x)=1+\widehat{\eta}(t,x)$. With some abuse of notation we define  the deformed space-time cylinder $I\times\Omega_\eta=\bigcup_{t\in I}\set{t}\times\Omega_{\eta(t)}\subset\R^{3}$.

The unknown functions for the fluid are the velocity field $\mathbf{u}:I \times \Oeta \rightarrow \mathbb{R}^2$, the density $\varrho:I \times \Oeta \rightarrow \mathbb{R}$, the pressure function $\pi: I \times \Oeta \rightarrow  \mathbb{R}$ and the motion of the fluid is governed by the Navier--Stokes equations
 \begin{equation}\label{2}
\begin{aligned}
 \partial_t (\varrho\bu)  + \Div\big(\varrho\mathbf{u}\otimes\bu\big)
 &= 
 \mu\Delx \bu+\lambda\nabla\Div\bu -\nabx\pi+ \varrho\bff &\text{ for all }(t,x,y)\in I\times\Omega_\eta,\\
 \partial_t\varrho+\Div (\varrho\bu)&=0&\text{ for all }(t,x,y)\in I \times\Omega_\eta,\\
 \end{aligned}
 \end{equation}
where the function $\bff:I \times \Oeta \mapsto\mathbb{R}^2$ is a given volume force and $\mu>0$ and $\lambda\geq0$ are the viscosity coefficients. For a given function $\eta: I \times (0,L) \rightarrow (0,\infty)$, we consider the change of variables $\bm{\varphi}_\eta: I\times\Omega_{\eta_0} \rightarrow I\times\Omega_{\eta}$ given by $\bm{\varphi}_\eta (t,x,y)= (t,x,\tfrac{\eta(t,x)}{\eta(0,x)} y)$. Here $\eta$ solves
 \begin{equation}\label{1}
\begin{aligned}
& \varrho_s\partial_t^2\eta + \alpha\partial_x^4\eta=g- \be_2^\intercal\bm{\tau}\circ\bm{\varphi}_\eta (t,x,\eta(0,x))(-\partial_x\eta \be_1+ \be_2)
&\text{ for all }  (t,x)\in I\times (0,L)
 ,\\
 	\end{aligned}
 \end{equation}
 The parameters $\varrho_s$ and $\alpha$ are positive constants and the function $g: I \times (0,L) \rightarrow  \mathbb{R}$ is a given forcing term. Here $\bftau$ denotes the Cauchy stress of the fluid given by Newton's rheological law, that is
$$\bftau=\mu\big(\nabx\bu+(\nabx\bu)^\intercal-\Div\bu\, \mathbb I_{2\times 2}\big)+(2\mu+\lambda)\Div\bu\, \mathbb I_{2\times 2}-\pi\mathbb I_{2\times 2}.$$
The equations \eqref{2} and \eqref{1} are coupled through the kinematic boundary condition
\begin{align}
\label{interfaceCond}
\bu\circ \bfvarphi_\eta (t,x,\eta(0,x))=\partial_t\eta (t,x)\be_2 \quad\text{ for all } (t,x)\in I\times (0,L).
\end{align} 

\begin{figure}\label{fig:1}
\begin{center}
\begin{tikzpicture}[scale=2]
  \begin{scope}     \draw [thick, <->] (0.5,0.5) -- (0.5,-0.5) -- (2.8,-0.5);
        \node at (0.5,0.6) {$\R$};
        \node at (1.5,-0.25) {$\Omega$};
        \node [blue] at (1.5,0.35) {$\omega=(0,L)\times\{1\}$};
       \node at (3.1,-0.5) {$\R$};
        \draw [blue] (0.5,0.2) -- (2.5,0.2);
                \draw (2.5,0.2) -- (2.5,-0.5);
                   \node at (3.5,0.2) {$\bfvarphi_\eta$};
        \draw [thick, <->] (4.5,0.5) -- (4.5,-0.5) -- (6.8,-0.5);
        \node at (4.5,0.6) {$\R$};
       \node at (7.1,-0.5) {$\R$};
       \draw [blue,dashed] (4.5,0.2) -- (6.5,0.2);
                \draw (6.5,0.2) -- (6.5,-0.5);
                \draw [red] (4.5,0.2) .. controls (4.7,-0.1) and (5,0.8) .. (5.3,0.1);
                \draw [red] (5.3,0.1) .. controls (5.5,-0.1) and  (5.7,0.4) .. (6.5,0.2);
                        \node at (5.4,-0.25) {$\Omega_\eta$};
                         \node [red] at (5.5,0.4) {$\eta$};
          \draw [<-,dashed] (4,-0.2) to [out=150,in=30] (3,-0.2);
  \end{scope}
\end{tikzpicture}
\caption{Domain transformation in 2D.}
\end{center}
\end{figure}
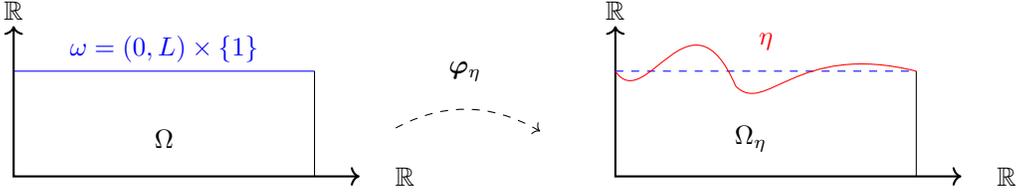
If one aims at the mathematical understanding of  \eqref{1}--\eqref{interfaceCond} beyond a small data/short-time result there is the pressing question:
\begin{align*}
\text{\emph{\textbf{Can contact occur between the elastic wall and the bottom of the fluid cavity?}}}
\end{align*}
 In this case the mathematical model as considered above breaks down as in figure \ref{2fig}. Typically,
global-in-time existence results are only valid up to a possible contact, cf. \cite{LeRu,CanMuh13,MuSc} for incompressible fluids (that is \eqref{2} with constant $\varrho$).
While such a situation is in most cases unnatural from a physical point of view, it seems very difficult to exclude it mathematically in a rigorous manner.
The only available result on the contact problem for elastic structures at the boundary is given in \cite{GraHil}, where an incompressible fluid is interacting with a visco-elastic plate as in figure \ref{fig:1}. In this case the structure equation contains on the left-hand side additionally the dissipative term $-\partial_t\partial_x^2\eta$ rendering it parabolic. Without it a corresponding result remained an open question formulated in \cite[Remark 4]{GraHil}: ``To prove our distance estimate .... One may wonder whether the fluid
dissipation would be sufficient.''

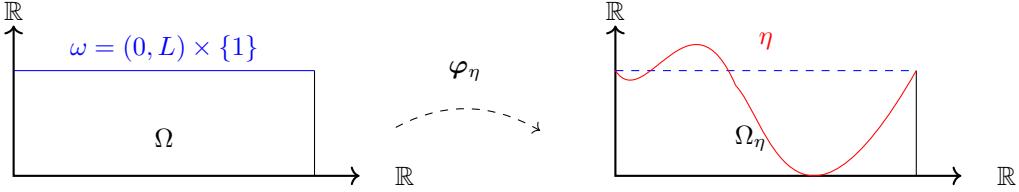
\begin{figure}\label{2fig}
\begin{center}
\begin{tikzpicture}[scale=2]
  \begin{scope}     \draw [thick, <->] (0.5,0.5) -- (0.5,-0.5) -- (2.8,-0.5);
        \node at (0.5,0.6) {$\R$};
        \node at (1.5,-0.25) {$\Omega$};
        \node [blue] at (1.5,0.35) {$\omega=(0,L)\times\{1\}$};
       \node at (3.1,-0.5) {$\R$};
        \draw [blue] (0.5,0.2) -- (2.5,0.2);
                \draw (2.5,0.2) -- (2.5,-0.5);
                   \node at (3.5,0.2) {$\bfvarphi_\eta$};
        \draw [thick, <->] (4.5,0.5) -- (4.5,-0.5) -- (6.8,-0.5);
        \node at (4.5,0.6) {$\R$};
       \node at (7.1,-0.5) {$\R$};
       \draw [blue,dashed] (4.5,0.2) -- (6.5,0.2);
                \draw (6.5,0.2) -- (6.5,-0.5);
                \draw [red] (4.5,0.2) .. controls (4.7,-0.1) and (5,0.8) .. (5.3,0.1);
                \draw [red] (5.3,0.1) .. controls (5.5,-0.1) and  (5.7,-1.2) .. (6.5,0.2);
                        \node at (5.4,-0.25) {$\Omega_\eta$};
                         \node [red] at (5.5,0.4) {$\eta$};
          \draw [<-,dashed] (4,-0.2) to [out=150,in=30] (3,-0.2);
  \end{scope}
\end{tikzpicture}
\caption{Interaction with contact.}
\end{center}
\end{figure}

Much less literature exists in the compressible case. Nevertheless, the existence of weak solutions until the time of contact for shells (see \cite{BrSc,BrScF}, \cite{MMNRS} and \cite{TW}) as well as the existence of local strong solutions (see \cite{MRT,MT,Mi}) has been established rather recently. In this paper we investigate the natural next step which is the question regarding contact. A no-contact result can probably not be expected for weak solutions while it is not of interest for local strong solutions (as the contact can be excluded in short-time if the structure displacement is sufficiently smooth). Hence we aim to prove that there is not contact in finite time provided the weak solution belongs to a certain regularity class. This is a conditional result where the correct class is part of the question. Our main theorem reads in a slightly simplified
version (see Theorem \ref{thm:main} for the statement in its full extend) as follows:
\begin{theorem}
Suppose that $(\eta,\varrho,\bu)$ is a weak solution to \eqref{2}--\eqref{interfaceCond} which satisfies additionally for some $q>\frac{8}{5}$
\begin{align*}
\partial_t(\varrho\bu)&,\,\Div(\varrho\bu\otimes\bu)\in L^1(0,T;L^q(\Omega_\eta)),\, \Div\bu\in L^1(0,T;L^{\infty}(\Omega_\eta)),\\
& \eta \in W^{1,2}(0,T;W^{1,2}_\sharp (0,L))\cap L^\infty(0,T;W^{3,2}_\sharp(0,L)). 
\end{align*} 
Then there is no contact for $t<T$.
\end{theorem}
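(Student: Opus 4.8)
The plan is to argue by contradiction: assume contact occurs at some time $T^*\le T$, i.e.\ $\min_{x}\eta(t,x)\to 0$ as $t\to T^*$, and derive a quantitative lower bound on $\eta$ that rules this out. The mechanism should be that the fluid, being squeezed out of a thin neck, generates a large pressure/stress that pushes back on the plate through \eqref{1}, and this reaction is strong enough — even without the parabolic term $-\partial_t\partial_x^2\eta$ used in \cite{GraHil} — because the remaining ingredients (fluid viscous dissipation, the fourth-order elastic operator $\alpha\partial_x^4\eta$, and the inertia $\varrho_s\partial_t^2\eta$) together control the geometry. Concretely, I would test the structure equation \eqref{1} against a carefully chosen test function, the natural candidate being something like $\psi(t,x)=-\partial_t\eta$ or a regularized version of the reciprocal $\eta^{-\beta}$ of the gap, and simultaneously test the Navier--Stokes momentum equation \eqref{2} with the extension $\bm{\varphi}_\eta$-pullback of the same profile so that the interface term on the right of \eqref{1} cancels against the boundary term produced by integrating the fluid stress by parts over $\Omega_\eta$. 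This coupling is exactly why the kinematic condition \eqref{interfaceCond} is crucial.

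The key steps in order: (i) Fix a point $x_0$ where the gap is (nearly) minimal and localize; write $m(t)=\min_x\eta(t,x)$ and note that by the Sobolev embedding $W^{3,2}_\sharp\hookrightarrow C^{2}$ the profile $\eta(t,\cdot)$ has uniformly bounded curvature, so near $x_0$ the gap behaves like $\eta(t,x)\approx m(t)+c(x-x_0)^2$; this controls the horizontal length scale $\ell(t)\sim\sqrt{m(t)}$ of the thin region. (ii) Use the incompressibility-defect bound $\Div\bu\in L^1(0,T;L^\infty)$ together with the kinematic condition to estimate the horizontal flux through the neck: $\int \bu\cdot\be_1$ across the neck must balance $\int \partial_t\eta$, giving a lower bound on $|\bu|$ of order $\ell(t)\,|\dot m(t)|/m(t)$ in the thin region (this is the classical Reynolds-lubrication blow-up, now with a compressible correction that is integrable in time and hence subleading). (iii) Insert this into the viscous dissipation $\mu\int_{\Omega_\eta}|\nabla\bu|^2$, which appears with a good sign after the energy-type testing; using a Poincaré/Hardy inequality in the thin strip one gets $\int|\nabla\bu|^2\gtrsim |\dot m(t)|^2/m(t)^{?}$ with a power that, for the perfectly elastic plate, must be compensated by the $\dot\eta$-regularity assumption $\eta\in W^{1,2}(0,T;W^{1,2}_\sharp)$ — this is the substitute for the missing structural dissipation. (iv) Combine the differential inequalities for $m(t)$ coming from the elastic energy ($\alpha\|\partial_x^2\eta\|_2^2$ bounded) and from the fluid terms, and integrate to contradict $m(T^*)=0$; the extra regularity on $\partial_t(\varrho\bu)$ and $\Div(\varrho\bu\otimes\bu)$ in $L^1(0,T;L^q)$ with $q>\tfrac85$ is what makes the inertial and convective remainder terms on the right of the tested momentum equation controllable against the thin-domain trace norms (the exponent $\tfrac85$ is presumably dictated by the scaling $\ell\sim\sqrt m$ and the $L^q$–$L^{q'}$ duality on a strip of width $m$).

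The main obstacle will be step (iii)–(iv): making the lubrication lower bound on the dissipation rigorous with the \emph{weak} regularity available, and in particular obtaining a closed differential inequality for $m(t)$ in which every ``bad'' term (convection, time derivative of momentum, compressible pressure contribution, the forcing $g$ and $\bff$) is strictly dominated near contact by the ``good'' terms (viscous dissipation plus the coercive part of the plate energy). I expect this to hinge on a sharp trace/interpolation estimate of the form $\|\bm{v}\|_{L^{q'}(\text{neck})}\lesssim m^{\theta}\|\nabla\bm v\|_{L^2(\Omega_\eta)}$ on the $\bm\varphi_\eta$-transformed domain, uniform as $m\to0$, together with the geometric bound from $\eta\in L^\infty(W^{3,2}_\sharp)$ controlling the change of variables $\bm\varphi_\eta$ and its Jacobian down to small gaps; tracking the exact powers of $m$ through this chain is where the condition $q>\tfrac85$ and the precise $\eta$-regularity class get used, and getting those powers to line up is the crux of the argument.
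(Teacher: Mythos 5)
Your sketch does not match the paper's argument, and as written it contains a genuine gap: the key technical ingredient that makes the proof work is never produced, and the route you propose (localization near the contact point, a Reynolds--lubrication lower bound on the viscous dissipation, and a differential inequality for $m(t)=\min_x\eta(t,x)$) is not what the paper does and is not clearly closable with the available regularity.

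The paper's proof is \emph{not} a contradiction argument and involves no localization at all. It hinges on a single explicit stream function
\[
\psi(t,x,y)=\partial_x\eta(t,x)\,\psi_0\!\Big(\tfrac{y}{\eta(t,x)}\Big),\qquad \psi_0(z)=z^2(3-2z),
\]
and the associated divergence-free field $\bfw=\curl\psi$. This $\bfw$ is exactly the test function that is admissible alongside $\phi=\partial_x^2\eta$ in the coupled weak formulation (since $\bfw\circ\bfvarphi_\eta=\partial_x^2\eta\,\bfe_2$ and $\bfw$ vanishes on the bottom), and the decisive algebraic fact is the elementary identity
\[
\int_0^{x}\partial_y^3\psi(t,z,y)\,\dd z=6\eta^{-2}(t,x)-6\eta_0^{-2}(x),
\]
which after a double integration by parts produces a \emph{global} term $6\mu\int_0^L\eta^{-1}(t,x)\dx$ on the left side of the resulting energy-type identity. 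Every remaining term is then controlled in terms of $1+\int_0^L\eta^{-1}\dx$ via the $L^p$ estimates on $\nabla\psi$ (Propositions with exponents governed by the Grandmont--Hillairet inequality $\int_0^L|\partial_x\eta|^4\eta^{-4\alpha}\dx\lesssim\|\partial_x^2\eta\|_{L^2}^2\|\eta\|_{L^\infty}^{2(1-2\alpha)}$), and Gronwall yields a uniform bound on $\|\eta^{-1}(t,\cdot)\|_{L^1}$. Finally, $L^1\to L^\infty$ is upgraded by citing a ready-made estimate from \cite[Proposition~7]{GraHil} using the uniform $W^{2,2}$ bound on $\eta$, not by the local curvature argument you sketch. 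In particular, the exponent $q>8/5$ comes directly from the admissible range of $p$ in the $L^p$ estimate for $\bfw$ (needed so that $p'\in(8/5,2)$ gives $\|\bfw\|_{L^p}^p\lesssim 1+\int\eta^{-1}$), not from a trace/interpolation estimate on a strip of width $m$.

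Your high-level intuitions are partially aligned with the paper (yes, you test the momentum equation with a vector field whose boundary trace cancels against the structure term, and yes, the viscosity is crucial), but the concrete candidate test functions you mention ($-\partial_t\eta$, or a regularized $\eta^{-\beta}$) would not produce the $\eta^{-1}$ control, and you explicitly flag that the crux of your steps (iii)--(iv) is open. To repair the attempt you should replace the lubrication/localization scheme with the stream-function construction above: write down $\psi$, compute $\bfw=\curl\psi$, introduce the companion scalar $\pi_*=\partial_{xy}\psi-\int_0^x\partial_y^3\psi\,\dd z$ so that $\Delta\bfw-\nabla\pi_*$ has only $\partial_x^2\psi$-type entries, exploit the boundary cancellations coming from the kinematic condition, and observe that the leftover boundary/volume term is precisely $6\int_0^L(\eta_0^{-1}-\eta^{-1})\dx$. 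Only after that does one need the regularity class \eqref{reg:ass}, and then only to make the remaining volume integrals Gronwall-compatible.
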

Notice that we do not need a constitutive law for the pressure $\pi$ such as $\pi=\pi(\varrho)=a\varrho^\gamma$ with $a>0$ and $\gamma>1$. Certainly, these conditions are needed for the existence of solutions, cf.~the references mentioned above. The reason for that is that it is sufficient to work with a divergence-free formulation for the momentum equation in the weak form, cf. Definition \ref{def:weakSolution} (c), which hides the pressure completely. Consequently, our result applies equally to heat-conducting fluids and is independent of the temperature, cf. Section \ref{sec:heat}.

Our approach follows that from \cite{GraHil} which is based on a particular stream function. Its construction relies heavily on the the flat reference geometry for one-dimensional elastic plates. Fortunately, it also suitable for compressible fluids and is completely independent of the pressure. Consequently, we obtain as s special case a statement for incompressible fluids. Given the recent results from \cite{ScSu} on strong solutions for perfectly elastic plates the regularity is not conditional here. Thus we answer the open question from \cite[Remark 4]{GraHil} in the affirmative (see Corollary \ref{cor:2} for the precise statement).

\section{Preliminaries \& main results}
\label{sec:prelim}
\subsection{Conventions}
For simplicity, we set all physical constants in \eqref{2}--\eqref{interfaceCond} to 1. The analysis is not affected as long as they are strictly positive.
For two non-negative quantities $f$ and $g$, we write $f\lesssim g$  if there is a $c>0$ such that $f\leq\,c g$. Here $c$ is a generic constant which does not depend on the crucial quantities. If necessary, we specify particular dependencies. We write $f\approx g$ if both $f\lesssim g$ and $g\lesssim f$ hold. In the notation for function spaces (see the next subsections), we do not distinguish between scalar- and vector-valued functions. However, vector-valued functions will usually be denoted in bold case.

\subsection{Classical function spaces}
Let $\mathcal O\subset\R^2$ be an open set.
The function spaces of continuous or $\alpha$-H\"older-continuous functions, $\alpha\in(0,1]$,
 are denoted by $C(\overline{\mathcal O})$ or $C^{0,\alpha}(\overline{\mathcal O})$ respectively, where $\overline{\mathcal O}$ is the closure of $\mathcal O$. Similarly, we write $C^1(\overline{\mathcal O})$ and $C^{1,\alpha}(\overline{\mathcal O})$.
We denote  by $L^p(\mathcal O)$ and $W^{k,p}(\mathcal O)$ for $p\in[1,\infty]$ and $k\in\mathbb N$, the usual Lebesgue and Sobolev spaces over $\mathcal O$. Spaces of solenoidal functions are denoted by the subscript $\Div$. For $k,m\in\mathbb{N}$, $k<m$ and for $1\leq p,q<\infty$, we consider the definition of Besov spaces by real interpolation of Sobolev spaces 
\begin{equation*}
B^{s}_{q,p}(\mathcal O) = (W^{k,p}(\mathcal O), W^{m,q}(\mathcal O))_{\theta, p}, \mbox{ where }s=(1-\theta)k+\theta m,\ \theta\in (0,1).
\end{equation*}
Note that the space $B^s_{p,p}$ coincides for $s\notin\N$ and $p\in(1,\infty)$ with the fractional Sobolev space (Sobolev-Slobodeckij space) $W^{s,p}(\mathcal O)$. 

On $(0,L)$ we consider periodic function spaces with zero mean value (that is $\int_0^L v\dx=0$)
denoted with subscript $ \sharp$ such as $L^p_ \sharp(0,L)$ and $W^{k,p}_\sharp$ and $B_{ q,p,\sharp}^s$.
We have the following equivalences
\begin{align*}
\|\cdot\|_{W^{1,2}_\sharp(0,L)}\approx \|\partial_x\cdot\|_{L^2(0,L)},\quad \|\cdot\|_{W^{2,2}_\sharp(0,L)}\approx \|\partial_x^2\cdot\|_{L^2(0,L)},\quad \|\cdot\|_{W^{4,2}_\sharp(0,L)}\approx \|\partial_x^4\cdot\|_{L^2(0,L)}.
\end{align*}

For a separable Banach space $(X,\|\cdot\|_X)$, we denote by $L^p(I;X)$ for  $p\in[1,\infty]$, the set of (Bochner-) measurable functions $u:I\rightarrow X$ such that the mapping $t\mapsto \|u(t)\|_{X}$ belongs to $L^p(I)$.
By $C(\overline I;X)$ the space of functions functions $u:I\rightarrow X$ which are continuous with respect to the norm-topology on $X$ and by $C_{\rm w}(\overline I;X)$ the functions which are continuous with respect to the weak topology on $X$. 
Finally, we denote by $W^{1,p}(I;X)$ the subspace of functions from $L^p(I;X)$ whose distributional time derivative belongs to $L^p(I;X)$ as well.

\subsection{Function spaces on variable domains}
\label{ssec:geom}
For a given function $\eta:(0,L)\rightarrow(0,\infty)$, we consider
$\Omega_{\eta}\subset \mathbb{R}^2$ as
\begin{equation*}
 \Omega_{\eta} = \{(x,y)\in \mathbb{R}^2 \mid x\in (0,L),\ y\in (0,\eta(x))\}.
\end{equation*}
The corresponding function spaces for variable domains are defined as follows.
\begin{definition}{(Function spaces)}
For $I=(0,T)$, $T>0$, and $\eta\in C(\overline{I}\times[0,L ],(0,\infty))$ we define for $1\leq p,r\leq\infty$
\begin{align*}
L^p(I;L^r(\Omega_\eta))&:=\big\{v\in L^1(I\times\Omega_\eta):\,\,v(t,\cdot)\in L^r(\Omega_{\eta(t)})\,\,\text{for a.e. }t,\,\,\|v(t,\cdot)\|_{L^r(\Omega_{\eta(t)})}\in L^p(I)\big\},\\
L^p(I;W^{1,r}(\Omega_\eta))&:=\big\{v\in L^p(I;L^r(\Omega_\eta)):\,\,\nabla v\in L^p(I;L^r(\Omega_\eta))\big\}.
\end{align*}
\end{definition}
Similarly, we can define the spaces $W^{1,p}(I;L^{r}(\Omega_\eta))$ and $W^{1,p}(I;W^{1,r}(\Omega_\eta))$ of functions with weak derivatives in time and Sobolev spaces with fractional differentiability (Sobolev-Slobodeckij spaces) $L^{p}(I;W^{s,r}(\Omega_\eta))$ with $s>0$.

\subsection{The main result}
We consider the problem
\begin{align}
\label{eq:1}\partial_t^2\eta + \partial_x^4\eta&=g- \be_2^\intercal\bm{\tau}\circ\bm{\varphi}_\eta (t,x,1)(-\partial_x\eta \be_1+ \be_2)\quad
&\text{ for all }  (t,x)\in I\times (0,L),\\
 \label{eq:2}\partial_t (\varrho\bu)  + \Div\big(\varrho\mathbf{u}\otimes\bu\big)
 &= 
 \mu\Delx \bu+\lambda\nabla\Div\bu -\nabx\pi+ \varrho\bff \quad&\text{ for all }(t,x,y)\in I\times\Omega_\eta,\\
\label{eq:3}\bu\circ \bfvarphi_\eta &=\partial_t\eta (t,x)\be_2 \quad&\text{ for all } (t,x)\in I\times (0,L),
\end{align}
where the Cauchy-stress is given by
$$\bftau=\mu\big(\nabx\bu+(\nabx\bu)^\intercal-\Div\bu\, \mathbb I_{2\times 2}\big)+(2\mu+\lambda)\Div\bu\, \mathbb I_{2\times 2}-\pi\mathbb I_{2\times 2}.$$
We prescribe the periodic boundary condition on the lateral boundary of the container:
\begin{equation}
\bu (t,x,y)=\bu (t,x+L,y),\quad \eta (t,x)=\eta (t,x+L),\quad\text{ for all } (t,x)\in I\times (0,L),\,y\in(0,\eta(x)),
\end{equation}
and no-slip boundary condition at the bottom of the container:
\begin{equation}\label{eq:2410}
\bu (t,x,0)=0\quad\text{ for all } (t,x)\in I\times (0,L).
\end{equation}
The system is completed with the following initial conditions
\begin{equation}\label{ini1}
\eta(0,x)=\eta_0(x)\mbox{ and }\partial_t\eta(0,x)=\eta_*(x) \mbox{ for } x\in (0,L), 
\end{equation}
\begin{equation}\label{ini2}
 \varrho\bu (0,x,y)=\bfm_0(x,y) \mbox{ for }(x,y)\in (0,L)\times (0,\eta_0(x)). 
\end{equation}

Notice that we do neither require a particular relation for the pressure nor is the continuity equation included.
We start with the definition of a weak solution to  \eqref{eq:1}--\eqref{ini2}. Notice that we work with a pressure-free formulation that is sufficient for our main result.
\begin{definition}[Weak solution] \label{def:weakSolution}
Let $(\bff, g, \eta_0, \eta_*, \bu_0)$ be a dataset such that
\begin{equation}
\begin{aligned}
\label{dataset}
&\bff \in L^2\big(I; L^2_{\mathrm{loc}}(\mathbb{R}^3)\big),\quad
g \in L^2\big(I; L^2_\sharp(0,L)\big), \quad
\eta_0 \in W^{2,2}_\sharp(0,L) \text{ with } \min\eta_0>0, 
\\
& 
\eta_* \in L^2_\sharp(0,L), \quad \varrho_0\in L^{\gamma}(\Omega_{\eta_0})\,\,\text{with}\,\,\varrho_0\geq0,\quad \bfm_0\in L^{\frac{2\gamma}{\gamma+1}}(\Omega_{\eta_0}),\\&\qquad\qquad\text{such that }\,\tfrac{\bfm_0}{\varrho_0}\circ\bfvarphi_{\eta_0} =\eta_* \bfe_2\text{ on $(0,L)$}.
\end{aligned}
\end{equation} 
We call the triplet
$(\eta,\varrho,\bu)$
a weak solution to the system \eqref{eq:1}--\eqref{ini2} with data $(\bff, g, \eta_0,  \eta_*,\varrho_0\bu_0)$ provided that the following holds:
\begin{itemize}
\item[(a)] The structure displacement $\eta$ satisfies
\begin{align*}
\eta \in W^{1,\infty} \big(I; L^2_\sharp(0,L) \big)\cap  L^\infty \big(I; W^{2,2}_\sharp(0,L) \big) \quad \text{with} \quad \eta>0 \mbox{ a.e},
\end{align*}
as well as $\eta(0)=\eta_0$ and $\partial_t\eta(0)=\eta_*$.
\item[(b)] The velocity field $\bu \in L^2 \big(I; W^{1,2}(\Omega_{\eta}) \big) \quad \text{with} \quad 
\bu\circ \bfvarphi_\eta =\partial_t\eta (t,x)\be_2\quad\text{for}\quad (t,x)\in I\times(0,L)$.
\item[(c)] The density $\varrho$ is a measurable function and we have
\begin{align*}
\varrho\bfu\in C_w(\overline I;L^1(\Omega_\eta)),\,\varrho\bfu\otimes\bfu\in L^1 \big(I;L^1(\Omega_{\eta}) \big),
\end{align*}
as well as $\varrho\bu(0)=\bfm_0$.
\item[(d)] For all tuple of test-functions $$(\phi, {\bfphi}) \in W^{1,2} \big(I; L^2_\sharp(0,L) \big)\cap  L^2 \big(I; W^{2,2}_\sharp(0,L) \big) \times W^{1,2}(I;L^\infty(\Omega_\eta))\cap L^\infty(I;W^{1,\infty}_{\Div}(\Omega_\eta))$$ with $\phi(T,\cdot)=0$, ${\bfphi}(T,\cdot)=0$ and $\bfphi\circ\bfvarphi_\eta =\phi {\bfe_2}$ on $I\times(0,L)$, we have
\begin{equation*}
\begin{aligned}
&\int_I  \frac{\mathrm{d}}{\dt}\bigg(\int_0^L \partial_t \eta \, \phi \dx
+
\int_{\Oeta}\varrho\bu  \cdot {\bfphi}\,\dd(x,y)
\bigg)\dt 
\\
&=\int_I  \int_{\Oeta}\big(  \varrho\bu\cdot \partial_t  {\bfphi} + \varrho\bu \otimes \bu: \nabla {\bfphi} 
 -  
\mu\nabla \bu:\nabla {\bfphi} +\varrho\bff\cdot{\bfphi} \big) \,\dd(x,y)\dt
\\
&\quad+
\int_I \int_0^L \big(\partial_t \eta\, \partial_t\phi+
 g\, \phi-\partial_x^2\eta\,\partial_x^2 \phi \big)\dx\dt.
 \end{aligned}
\end{equation*}
\end{itemize}
\end{definition}
The system \eqref{eq:1}--\eqref{ini2} is clearly undetermined such that we cannot expect the existence of a solution unless additional assumptions are made. In particular, we will complement \eqref{eq:1}--\eqref{eq:3} with the balance of mass (and a constitutive law for the pressure) in order to obtain the (compressible) Navier--Stokes equations as a special case. Corresponding results follow as corollaries of our main result Theorem \ref{thm:main} and are presented in Section \ref{sec:part} below. 

\begin{theorem}\label{thm:main}
Suppose that $(\eta,\varrho,\bu)$ is a weak solution to the system \eqref{eq:1}--\eqref{ini2} in the sense of Definition \ref{def:weakSolution}. Suppose additionally for some $q>\frac{8}{5}$:
\begin{align}\label{reg:ass}
\begin{aligned}
 \partial_t(\varrho\bu),\,&\Div(\varrho\bu\otimes\bu),\,\varrho\bff\in L^1(0,T;L^q(\Omega_\eta)),\, \Div\bu\in L^1(0,T;L^{\infty}(\Omega_\eta)),\\
& \eta \in W^{1,2}(0,T;W^{1,2}_\sharp (0,L))\cap L^\infty(0,T;W^{3,2}_\sharp(0,L)). 
\end{aligned}
\end{align} 
Then there is no contact for $t<T$.
\end{theorem}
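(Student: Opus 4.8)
We only sketch the argument, which follows the strategy of \cite{GraHil}.

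\emph{Reduction and the geometry of contact.}
Suppose for contradiction that there is a $t<T$ with $\min_x\eta(t,x)=0$, and set $\delta(t):=\min_{x\in(0,L)}\eta(t,x)$. By the one-dimensional embeddings $W^{1,2}(0,L)\hookrightarrow C^{0,1/2}([0,L])$, $W^{3,2}(0,L)\hookrightarrow C^{2,1/2}([0,L])$ and interpolation, the regularity in \eqref{reg:ass} yields $\eta\in C(\overline I;C^2([0,L]))$ with $\sup_{t}\|\eta(t,\cdot)\|_{C^2}<\infty$, and $\partial_t\eta\in L^2(I;C([0,L]))$. Hence $\delta$ is absolutely continuous with $|\dot\delta(t)|\le\|\partial_t\eta(t,\cdot)\|_{C([0,L])}\lesssim\|\partial_t\eta(t,\cdot)\|_{W^{1,2}}\in L^2(I)$, and $\delta(0)=\min\eta_0>0$, so there is a \emph{first} contact time $T^*\in(0,T)$ with $\delta(T^*)=0$ and $\delta>0$ on $[0,T^*)$. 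Using periodicity we may assume the minimum at time $t$ is attained at a point $x_0(t)$ with $x_0(t)\to x^*$ as $t\to T^*$; the uniform bound on $\|\eta(t,\cdot)\|_{C^2}$ then forces the thin cap $\{x:\eta(t,x)\le 2\delta(t)\}$ to contain an interval of half-length comparable to $\delta(t)^{1/2}$ around $x_0(t)$, on which $\eta(t,\cdot)\approx\delta(t)$. The plan is to test the weak formulation with a divergence-free field realising a lubrication-type squeeze flow in this cap and thereby obtain a differential inequality for a quantity that would have to blow up at $T^*$.

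\emph{The test function.}
For $t<T^*$ we construct, exploiting crucially the flat reference strip, a stream function $\psi_t$ on $\Omega_{\eta(t)}$ with $\psi_t=\partial_y\psi_t=0$ on $\{y=0\}$ and $\partial_y\psi_t=0$ on $\{y=\eta(t,\cdot)\}$, and put $\bfphi_t:=(-\partial_y\psi_t,\partial_x\psi_t)$. Then $\Div\bfphi_t=0$, $\bfphi_t=0$ on the bottom, and $\bfphi_t(t,x,\eta(t,x))=\phi_t(t,x)\be_2$ with $\phi_t(t,\cdot)=\partial_x\psi_t(t,\cdot,\eta(t,\cdot))\in W^{2,2}_\sharp(0,L)$ (the mean-value correction required to land in $W^{2,2}_\sharp$ being harmless), so that $(\phi_t,\bfphi_t)$ is admissible in Definition \ref{def:weakSolution}(d) after a cut-off in time near $T^*$. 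The prescribed horizontal flux $x\mapsto\psi_t(t,x,\eta(t,x))$ is the squeeze flow that evacuates fluid from the thin cap, normalised so that testing the structure equation with $\phi_t$ detects the gap. Using the uniform $C^2$-geometry (this is where $\eta\in L^\infty(I;W^{3,2}_\sharp)$, beyond the $W^{2,2}_\sharp$ of Definition \ref{def:weakSolution}, is consumed) together with $\partial_t\eta\in L^\infty(I;L^2)\cap L^2(I;W^{1,2})$, one obtains, for explicit exponents $a,b,c>0$,
\begin{align*}
\|\nabla\bfphi_t\|_{L^2(\Omega_{\eta(t)})}\lesssim\delta(t)^{-a},\qquad \|\bfphi_t\|_{L^{q'}(\Omega_{\eta(t)})}\lesssim\delta(t)^{-b},\qquad \|\partial_t\bfphi_t\|_{L^\infty(\Omega_{\eta(t)})}+\|\phi_t\|_{W^{2,2}_\sharp(0,L)}\lesssim\delta(t)^{-c}\,\big(1+\|\partial_t\eta(t,\cdot)\|_{W^{1,2}}\big),
\end{align*}
where $q'=q/(q-1)$; these are the two-dimensional counterparts of the estimates in \cite{GraHil}, and their proof is where the flat geometry is indispensable.

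\emph{Insertion and conclusion.}
Thanks to \eqref{reg:ass} — which makes $\frac{\mathrm d}{\mathrm dt}\!\int_{\Omega_\eta}\varrho\bu\cdot\bfphi_t$ meaningful via a Reynolds transport identity on the moving domain and legitimates an integration by parts in time, the pressure being absent because $\Div\bfphi_t=0$ and $\bu=0$ on the bottom — the weak identity of Definition \ref{def:weakSolution}(d) with $(\phi_t,\bfphi_t)$ becomes a pointwise-in-time balance on $(0,T^*)$. Its right-hand side is estimated by $\|\nabla\bu\|_{L^2}\|\nabla\bfphi_t\|_{L^2}\lesssim\delta^{-a}\|\nabla\bu\|_{L^2}$ (viscous term); by $\big(\|\partial_t(\varrho\bu)\|_{L^q}+\|\Div(\varrho\bu\otimes\bu)\|_{L^q}+\|\varrho\bff\|_{L^q}\big)\|\bfphi_t\|_{L^{q'}}+\|\varrho\bu\|_{L^1}\|\partial_t\bfphi_t\|_{L^\infty}\lesssim\delta^{-\max(b,c)}(\cdots)$ (inertial terms, after transferring a derivative onto $\varrho\bu\otimes\bu$ modulo a boundary contribution controlled by the kinematic coupling \eqref{eq:3}, and using $\varrho\bu\in L^\infty(I;L^1)$ from Definition \ref{def:weakSolution}(c)); by $\|\partial_x^2\eta\|_{L^2}\|\partial_x^2\phi_t\|_{L^2}+\|g\|_{L^2}\|\phi_t\|_{L^2}\lesssim\delta^{-c}(\cdots)$ (beam terms); and by a volume-drift term $\lesssim\|\Div\bu\|_{L^\infty}\,\delta^{-c'}$ coming from the mean-value correction of $\phi_t$. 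Since $\|\nabla\bu\|_{L^2}^2\in L^1(I)$ by Definition \ref{def:weakSolution}(b), while $\|\Div\bu\|_{L^\infty},\|\partial_t(\varrho\bu)\|_{L^q},\|\Div(\varrho\bu\otimes\bu)\|_{L^q},\|\varrho\bff\|_{L^q}\in L^1(I)$ and $\|\partial_t\eta(t,\cdot)\|_{W^{1,2}},\|\partial_x^2\eta\|_{L^2},\|g\|_{L^2}$ are in $L^2(I)$ or bounded, Young's inequality — after absorbing the beam and inertial corrections — produces a differential inequality
\begin{align*}
\frac{\mathrm d}{\mathrm dt}\,\mathcal E(t)\le C\,h(t)\,\big(1+\mathcal E(t)\big),\qquad h\in L^1(0,T),
\end{align*}
for a distance functional $\mathcal E$ with $\mathcal E(t)\to+\infty$ as $\delta(t)\to0$; here the restriction $q>\tfrac85$ is precisely the range in which the weight defining $\mathcal E$ (hence the flux defining $\psi_t$, hence $\phi_t$) can be tuned so that all the negative powers of $\delta$ on the right are dominated by $\mathcal E$ itself. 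Gronwall's inequality then gives $\sup_{t<T^*}\mathcal E(t)\le\mathcal E(0)\,e^{C\|h\|_{L^1}}<\infty$, contradicting $\mathcal E(t)\to+\infty$. Hence no contact occurs for $t<T$.

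\emph{Main obstacle.}
The heart of the matter is the construction of $\bfphi_t$ with the \emph{sharp} gap dependence above, especially of the norm $\|\bfphi_t\|_{L^{q'}}$ that is paired with the inertial terms. In the incompressible analysis of \cite{GraHil} only $\|\nabla\bfphi_t\|_{L^2}$ occurs (via the symmetric gradient and Korn's inequality), whereas the compressible momentum equation pairs $\partial_t(\varrho\bu)$ and $\Div(\varrho\bu\otimes\bu)$ — only $L^1$ in time under \eqref{reg:ass} — with $\bfphi_t$ itself; balancing the lubrication blow-up of $\|\bfphi_t\|_{L^{q'}}$ (which worsens as $q'\uparrow\infty$) against this $L^1_t$-integrability is exactly what fixes the threshold $q>\tfrac85$. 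Carrying this out requires decomposing $\Omega_{\eta(t)}$ into the thin cap and its complement, estimating the squeeze-flow profile separately on each, and choosing the weight defining $\mathcal E$ (hence $\phi_t$) so that $\mathcal E$ diverges at contact while $\|\phi_t\|_{W^{2,2}_\sharp}$ grows only polynomially in $\delta^{-1}$. The secondary, purely technical, difficulty is to make the pointwise-in-time, pressure-free balance rigorous on the genuinely moving domain $\Omega_{\eta(t)}$ — which is precisely what the hypotheses $\partial_t(\varrho\bu),\Div(\varrho\bu\otimes\bu)\in L^1(I;L^q)$ and $\Div\bu\in L^1(I;L^\infty)$ are there to supply.
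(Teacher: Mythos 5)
The paper's actual proof does not argue by contradiction, does not introduce a first contact time, a contact point, a gap $\delta(t)=\min_x\eta(t,x)$, or a thin-cap decomposition; these are your inventions and are not how the argument closes. Instead the paper takes the \emph{global, explicit} stream function $\psi(t,x,y)=\partial_x\eta\,\psi_0(y/\eta)$ with $\psi_0(z)=z^2(3-2z)$, tests the weak formulation with the pair $(\partial_x^2\eta,\bfw)$, $\bfw=\curl\psi$, and introduces the auxiliary scalar $\pi_*=\partial_{xy}\psi-\int_0^x\partial_y^3\psi\,\dd z$. The single decisive computation is $\int_0^x\partial_y^3\psi\,\dd z=6\eta^{-2}(t,x)-6\eta_0^{-2}(x)$: after integrating by parts and using the kinematic coupling $\bfu\circ\bfvarphi_\eta=\partial_t\eta\,\be_2$, this term (together with $\Div\bu$) deposits $6\mu\int_0^L\eta^{-1}(t,x)\,\dd x$ on the left-hand side. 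That integral \emph{is} the distance functional; Gronwall gives a uniform $L^1$-bound on $\eta^{-1}$, which is then upgraded to $L^\infty$ by \cite[Prop.~7]{GraHil}. The threshold $q>8/5$ enters through the $L^p$-estimates of $\partial_x\psi,\partial_y\psi$ (via \cite[Prop.~5]{GraHil}, valid for $2<p<8/3$, i.e.\ $p'>8/5$), not through any tuning of a lubrication weight.

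Your sketch correctly identifies the high-level ingredients (divergence-free test field from a stream function, role of the kinematic coupling, Gronwall, and why $q>8/5$ comes from pairing $\bfphi$ with the inertia in $L^1_tL^q_x$), but it never actually produces the test function, never identifies the distance functional $\mathcal E$, and never exhibits the mechanism that makes a negative power of $\eta$ appear on the good side of the estimate — you defer all of this to the final ``Main obstacle'' paragraph, which is precisely the content of the proof. Moreover the program you describe (localize in a cap where $\eta\approx\delta(t)$, derive pointwise-in-$\delta$ blow-up rates $\delta^{-a},\delta^{-b},\delta^{-c}$ for the test-function norms, and argue by contradiction at a first contact time) is a genuinely different and, as written, unsubstantiated route: no construction is given achieving those rates, and it is not shown that the resulting Gronwall inequality closes with only $L^1$-in-time control of $\partial_t(\varrho\bu)$, $\Div(\varrho\bu\otimes\bu)$ and $\Div\bu$. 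The paper sidesteps all of this by the choice $\psi=\partial_x\eta\,\psi_0(y/\eta)$, which makes the key identity elementary. In short: right spirit, but the central device — the explicit global stream function and the $\pi_*$-computation yielding $\int_0^L\eta^{-1}\,\dd x$ — is missing, and without it the argument does not close.
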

Theorem \ref{thm:main} follows from the distance estimate Proposition \ref{thm:dist} which is proved in Section \ref{sec:3}.

\begin{remark}
If  \eqref{eq:1}--\eqref{ini2} is supplemented with the continuity equation
\begin{align*}
\partial_t\varrho+\Div(\varrho\bfu)=0\quad&\text{ for all }(t,x,y)\in I\times\Omega_\eta
\end{align*}
the standard maximum principle applies, cf. \cite[Theorem 3.3]{BrScF}: Under the assumption that $\Div\bu\in L^1(0,T;L^{\infty}(\Omega_\eta))$, the density stays bounded as long as it is initially. Hence the additional assumption $\varrho\bff\in L^1(0,T;L^q(\Omega_\eta))$ is not needed.
\end{remark}

\begin{remark}
Theorem \ref{thm:main} remains true even if the structure equations contain dissipation, i.e. the term $-\partial_t\partial_x^2\eta$ on the left-hand side or additional second order terms such as $-\partial_x^2\eta$.  
\end{remark}

\section{The distance estimate}\label{sec:3}
In this section, our primary goal is to demonstrate the following result regarding the distance of the structure to the boundary:
\begin{proposition}\label{thm:dist}
Suppose that $(\eta,\varrho,\bu)$ is a weak solution to the system \eqref{eq:1}--\eqref{eq:2410} in the sense of Definition \ref{def:weakSolution}. Suppose additionally for some $q>\frac{8}{5}$:
\begin{align*}
 \partial_t(\varrho\bu),\,&\Div(\varrho\bu\otimes\bu),\,\varrho\bff\in L^1(0,T;L^q(\Omega_\eta)),\, \Div\bu\in L^1(0,T;L^{\infty}(\Omega_\eta)),\\
& \eta \in W^{1,2}(0,T;W^{1,2}_\sharp (0,L))\cap L^\infty(0,T;W^{3,2}_\sharp(0,L)). 
\end{align*} 
Then there exists a constant $C_0$ such that 
\begin{equation*}
\sup_{t\in (0,T)} \|{\eta}^{-1}(t,\cdot)\|_{L^{\infty}_{\sharp} (0,L)} \leq C_0.
\end{equation*}
\end{proposition}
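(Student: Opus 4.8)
The plan is to adapt the stream-function method of \cite{GraHil} to the present pressure-free, compressible setting. The structural feature that makes it work is that the plate sits over the \emph{flat} segment, so $\bfvarphi_\eta$ is merely an anisotropic rescaling in the $y$-variable; this lets one write down, for a prescribed profile $\phi=\phi(t,x)$ with $\int_0^L\phi\,\dx=0$, a solenoidal field $\bfphi=\bfB_\eta[\phi]$ on $\Omega_{\eta(t)}$ with $\bfphi=0$ on $\{y=0\}$ and $\bfphi\circ\bfvarphi_\eta=\phi\,\be_2$ on $(0,L)$, in stream-function form $\bfphi=\nabla^{\perp}\Psi$ with $\Psi(t,x,y)=\big(\int_0^x\phi(t,\xi)\,\dd\xi\big)\,\Theta\big(y/\eta(t,x)\big)$ for a fixed profile $\Theta$ vanishing with its derivative at $0$ (and corrected for periodicity). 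Such a pair $(\phi,\bfphi)$ is admissible in Definition~\ref{def:weakSolution}(d), in which the pressure --- hence any constitutive law or temperature --- is already eliminated. The idea is to insert into (d), over an interval $(0,t_1)$, a test pair built from $\eta$ itself, so that the structure terms $\int_0^L(\partial_t\eta\,\partial_t\phi-\partial_x^2\eta\,\partial_x^2\phi)\,\dx$ reproduce, up to absorbable remainders, a functional $E(t)$ of the gap that blows up at a contact, and then to bound everything else.

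For the functional I would take $E(t)=\int_0^L G(\eta(t,x))\,\dx$ --- possibly augmented by a $G$-weighted beam energy $\int_0^L G'(\eta)|\partial_t\eta|^2\,\dx$ should the remainders demand it --- with $G$ smooth, decreasing, constant for $\eta\ge1$ and $\approx\eta^{-b}$ near $0$ for an exponent $b>0$ to be fixed. Since $\eta\in L^\infty(0,T;W^{3,2}_\sharp(0,L))\hookrightarrow L^\infty(0,T;C^{2,1/2}([0,L]))$, a minimum of height $\varepsilon$ of $\eta(t,\cdot)$ forces $\eta(t,\cdot)\le2\varepsilon$ on an $x$-interval of length $\gtrsim\varepsilon^{1/2}$, so $E(t)\gtrsim\varepsilon^{1/2-b}$; hence for $b>\tfrac12$ the claimed bound $\|\eta^{-1}(t)\|_{L^\infty_\sharp}\le C_0$ is equivalent to a uniform bound on $E$. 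I would run a continuation argument on the maximal interval $[0,T^\sharp)$ with $\inf\eta>0$: a bound $\sup_{[0,T^\sharp)}E\le C_0$ with $C_0$ independent of $T^\sharp$ forces $T^\sharp=T$, which is the Proposition.

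Inserting $(\phi,\bfphi)$ with $\phi$ the mean-zero part of $\partial_t G(\eta)$ (so that the kinetic structure term returns $\frac{\dd E}{\dd t}$) and integrating in time yields a representation $E(t_1)=E(0)+(\text{time-boundary terms})+(\text{fluid terms})+(\text{structural remainders})$. The time-boundary terms $\int_{\Omega_\eta}\varrho\bu\cdot\bfphi$ and $\int_0^L\partial_t\eta\,\phi\,\dx$ are controlled by $\varrho\bu\in C_{\mathrm w}(\overline I;L^1)$ and $\partial_t\eta\in L^\infty(I;L^2)$ against $L^\infty$-bounds for $\bfphi,\phi$. The fluid terms $\int\varrho\bu\cdot\partial_t\bfphi$, $\int\varrho\bu\otimes\bu:\nabla\bfphi$, $\int\nabla\bu:\nabla\bfphi$, $\int\varrho\bff\cdot\bfphi$ are estimated using, respectively, $\partial_t(\varrho\bu)\in L^1(I;L^q)$ (after reorganising the time-boundary bracket through a Reynolds transport identity), $\Div(\varrho\bu\otimes\bu)\in L^1(I;L^q)$, the energy bound $\bu\in L^2(I;W^{1,2}(\Omega_\eta))$, and $\varrho\bff\in L^1(I;L^q)$; and the mean-value correction of $\phi$ --- unavoidable here because $\int_0^L\partial_t\eta\,\dx=\int_{\Omega_\eta}\Div\bu\ne0$, unlike in the incompressible case --- contributes a term bounded by $\|\Div\bu\|_{L^1(I;L^\infty)}$, which is exactly why that hypothesis is imposed. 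Each fluid bound has the shape $C\int_0^{t}a(s)\,\|\eta^{-1}(s)\|_{L^\infty_\sharp}^{\kappa}\,\dd s$ with $a\in L^1(I)$ from the hypotheses and $\kappa$ dictated by the test-function estimates; expressing $\|\eta^{-1}\|_{L^\infty_\sharp}^{\kappa}$ via $E\gtrsim\|\eta^{-1}\|_{L^\infty_\sharp}^{b-1/2}$ turns the whole thing into
\[
E(t_1)\le C_0+C\int_0^{t_1}a(s)\,E(s)^{\theta}\,\dd s,\qquad t_1<T^\sharp,
\]
and a Grönwall argument closes it. The threshold $q>\tfrac85$ is precisely the condition under which $b$ can be chosen to satisfy both $b>\tfrac12$ and the inequality making $\theta$ admissible, once the powers of $\|\eta^{-1}\|_{L^\infty_\sharp}$ in $\|\bfphi\|_{L^{q'}(\Omega_\eta)}$ and $\|\nabla\bfphi\|_{L^2(\Omega_\eta)}$ are balanced against $E\gtrsim\varepsilon^{1/2-b}$.

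The main obstacle is the construction of $\bfB_\eta[\phi]$ \emph{together with} sharp, $\|\eta^{-1}\|_{L^\infty_\sharp}$-dependent bounds on $\bfphi$, $\nabla\bfphi$, $\partial_t\bfphi$ and $\nabla\partial_t\bfphi$ in the $L^{q'}(\Omega_\eta)$- and $L^2(\Omega_\eta)$-norms --- sharp enough that, together with the geometric lower bound on $E$, the $L^q$--$L^{q'}$ pairings close exactly at $q=\tfrac85$ --- and, inseparably, the delicate bookkeeping needed to show that the structural remainders (the lower-order terms from $\partial_x^2\phi$, the odd powers of $\partial_t\eta$ produced on differentiating $\phi$, and the mean-value correction) are either absorbed into $E$ or dominated by the right-hand side. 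A second, linked difficulty: no continuity equation is assumed, so $\varrho$ need not be bounded, and the inertial and convective contributions must be routed through $\partial_t(\varrho\bu),\Div(\varrho\bu\otimes\bu)\in L^1(I;L^q)$ rather than through a pointwise density bound --- and it is this detour that brings the Lebesgue exponent $q$, hence its threshold, into the estimate.
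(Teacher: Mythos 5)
Your high-level strategy (solenoidal stream-function test pair, Gr\"onwall, then upgrade a one-dimensional gap functional to an $L^\infty$ bound on $\eta^{-1}$ via the $W^{3,2}$-regularity) matches the paper's, but the specific test function you propose, and the mechanism by which the gap functional is supposed to appear, are wrong. You set $\phi$ to be (the mean-zero part of) $\partial_t G(\eta)=G'(\eta)\partial_t\eta$ and claim the kinetic structure term $\int_0^L\partial_t\eta\,\partial_t\phi\dx$ returns $\frac{\dd}{\dt}\int_0^L G(\eta)\dx=\int_0^L G'(\eta)\partial_t\eta\dx$. It does not: with that $\phi$ the kinetic term is $\int_0^L\partial_t\eta\,\partial_t\big(G'(\eta)\partial_t\eta\big)\dx$, a cubic expression in $\partial_t\eta$ with no relation to $\dot E$, and no choice of $G$ can make the \emph{structure} side of the weak formulation produce $\int_0^L\eta^{-1}\dx$.

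The actual source of the gap term in the paper is entirely on the \emph{fluid} side and is tied to the specific stream function $\psi(t,x,y)=\partial_x\eta\,\psi_0(y/\eta)$ with $\psi_0(z)=z^2(3-2z)$, tested against the canonically mean-zero $\phi=\partial_x^2\eta$ (so your mean-value correction is a self-inflicted complication). The cubic profile gives $\psi_0'''\equiv-12$, hence $\partial_y^3\psi=-12\,\partial_x\eta\,\eta^{-3}$ is an exact $x$-derivative, so $\int_0^x\partial_y^3\psi\,\dd z=6\eta^{-2}(t,x)-6\eta^{-2}(t,0)$. After constructing the companion pseudo-pressure $\pi_*=\partial_{xy}\psi-\int_0^x\partial_y^3\psi\,\dd z$ and integrating $\int_{\Omega_\eta}\nabla\bu:\nabla\bfw$ by parts, the boundary term $\int_{\partial\Omega_\eta}(\nabla\bfw-\pi_*\mathbb I)\bn_\eta\cdot\bu$, combined with the kinematic condition $\bu\circ\bfvarphi_\eta=\partial_t\eta\,\be_2$, produces $\int_0^t\int_0^L\partial_t\eta\cdot 6\eta^{-2}\dx\ds=-6\int_0^L\eta^{-1}\dx\big|_0^t$. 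That is where $\int\eta^{-1}$ enters, and it fixes $G(s)=s^{-1}$ rather than leaving it as a free parameter. Everything downstream (the $L^p$ bounds on $\bfw$ for $p<8/3$ via \cite[Proposition~5]{GraHil}, hence the threshold $q>8/5$, and the Gr\"onwall closure in $\int_0^L\eta^{-1}\dx$, followed by \cite[Proposition~7]{GraHil} to pass to $L^\infty$) hinges on this identity, which your choice of $(\phi,\Psi)$ does not reproduce: with $\Psi=(\int_0^x\phi)\Theta(y/\eta)$ the quantity $\partial_y^3\Psi$ is not an exact $x$-derivative unless $\int_0^x\phi=\partial_x\eta$, i.e.\ unless you take $\phi=\partial_x^2\eta$ after all.
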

\subsection{The stream function}
 We consider the function
\begin{align}\label{def:psi}
\psi(t,x,y)=\partial_x\eta \psi_0\Big(\frac{y}{\eta(t,x)}\Big),\, \forall\ (t,x,y)\in I \times \Oeta,  
\end{align}
where 
\begin{equation*}
 \psi_0(z)=z^2(3-2z),\ \forall \ z\in (0,1).
\end{equation*}
We define $\bfw:=\curl \psi=(-\partial_y \psi, \partial_x \psi)^\top$ which obviously satisfies
\begin{align*}
\bfw(t,x,\eta(x))&=\partial_x^2\eta(t,x) \be_2\quad (t,x)\in(0,T)\times(0,L),\\
\bfw(t,x,0)&=0\quad (t,x)\in(0,T)\times(0,L).
\end{align*}
Let us recall \cite[Proposition 5]{GraHil} which we need several times in this section:
\begin{proposition}\label{prop8}
Let $\alpha\in (0,\frac{1}{2})\setminus \{\frac{1}{4}\}$. Given a positive function $\eta\in W^{2,2}_\sharp (0,L)$, the following holds:
\begin{equation*}
\int_0^{L}\tfrac{|\partial_x\eta|^4}{|\eta|^{4\alpha}}\dx \lesssim \|\partial_{xx}\eta\|^2_{L^2(0,L)}\|\eta\|^{2(1-2\alpha)}_{L^{\infty}(0,L)} .
\end{equation*}
\end{proposition}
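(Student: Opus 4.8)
The plan is to reduce the quartic integral to an expression involving itself to the power $\tfrac12$ by a single integration by parts, and then absorb the self-term. Set
\[
A:=\int_0^L\frac{|\partial_x\eta|^4}{|\eta|^{4\alpha}}\dx .
\]
Since $\eta\in W^{2,2}_\sharp(0,L)\hookrightarrow C^1([0,L])$ is periodic and positive, it attains a strictly positive minimum on $[0,L]$; hence $|\eta|^{-4\alpha}$ is bounded, $\partial_x\eta\in L^\infty(0,L)$, and therefore $A<\infty$. (This a priori bound depends badly on $\min\eta$, but it will only be used to make the division below legitimate and will not enter the final estimate.) We may assume $A>0$, as otherwise there is nothing to prove.

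\emph{Step 1: integration by parts.} Because $\alpha\neq\tfrac14$, the map $s\mapsto\frac{s^{1-4\alpha}}{1-4\alpha}$ is a primitive of $s^{-4\alpha}$, so $\eta^{-4\alpha}\,\partial_x\eta=\partial_x\big(\tfrac{1}{1-4\alpha}\eta^{1-4\alpha}\big)$ weakly; the chain rule is justified since $\eta$ is bounded away from $0$ and lies in $W^{1,\infty}(0,L)$, while $(\partial_x\eta)^3\in W^{1,1}(0,L)$ is $L$-periodic with $\partial_x\big((\partial_x\eta)^3\big)=3(\partial_x\eta)^2\partial_x^2\eta$ (here $(\partial_x\eta)^2\in L^\infty$ and $\partial_x^2\eta\in L^2$). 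Writing $A=\int_0^L(\partial_x\eta)^3\,\partial_x\big(\tfrac{1}{1-4\alpha}\eta^{1-4\alpha}\big)\dx$ and integrating by parts — the boundary contributions cancel by periodicity — yields
\[
A=-\frac{3}{1-4\alpha}\int_0^L(\partial_x\eta)^2\,\partial_x^2\eta\,\eta^{1-4\alpha}\dx .
\]

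\emph{Step 2: Cauchy--Schwarz, absorption, and the weight bound.} Split the integrand as $\tfrac{(\partial_x\eta)^2}{\eta^{2\alpha}}\cdot\big(\partial_x^2\eta\,\eta^{1-2\alpha}\big)$ and apply the Cauchy--Schwarz inequality to these two factors, noting that the square of the first integrates to $A$ and the square of the second to $\int_0^L|\partial_x^2\eta|^2\eta^{2-4\alpha}\dx$:
\[
A\le\frac{3}{|1-4\alpha|}\,A^{1/2}\Big(\int_0^L|\partial_x^2\eta|^2\,\eta^{2-4\alpha}\dx\Big)^{1/2}.
\]
Dividing by $A^{1/2}\in(0,\infty)$ and squaring gives $A\le\frac{9}{(1-4\alpha)^2}\int_0^L|\partial_x^2\eta|^2\eta^{2-4\alpha}\dx$. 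Finally, since $\alpha\in(0,\tfrac12)$ we have $2-4\alpha\in(0,2)$, so $\eta^{2-4\alpha}\le\|\eta\|_{L^\infty(0,L)}^{2-4\alpha}=\|\eta\|_{L^\infty(0,L)}^{2(1-2\alpha)}$ pointwise, and the claimed inequality follows with explicit constant $\tfrac{9}{(1-4\alpha)^2}$.

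The only genuinely delicate point — and the expected obstacle — is the absorption step, which requires $A<\infty$; this is precisely where the positivity and $C^1$-continuity of $\eta$ enter. If one prefers not to rely on $\min\eta>0$, one can instead run Steps 1--2 with the regularised weight $(|\eta|+\delta)^{-4\alpha}$, whose corresponding integral is manifestly finite, obtain the bound uniformly in $\delta>0$, and let $\delta\downarrow0$ by monotone convergence. The two restrictions on $\alpha$ are transparent in this argument: $\alpha\neq\tfrac14$ is needed so that the primitive of $\eta^{-4\alpha}$ is a power of $\eta$ rather than $\log\eta$ (a logarithm would degrade the clean power of $\|\eta\|_{L^\infty}$ into a $\log\|\eta\|_{L^\infty}$-type factor), and $\alpha<\tfrac12$ is needed so that $\eta^{2-4\alpha}$ is pointwise controlled by $\|\eta\|_{L^\infty}^{2-4\alpha}$ instead of blowing up where $\eta$ is small.
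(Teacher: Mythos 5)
Your proof is correct. The paper itself does not prove this proposition; it simply recalls it from Grandmont--Hillairet \cite[Proposition 5]{GraHil}, so there is no in-paper argument to compare against, but your integration-by-parts/absorption argument is a valid self-contained derivation and is essentially the standard way this estimate is obtained. The key steps all check out: the identity $\eta^{-4\alpha}\partial_x\eta=\partial_x\big(\tfrac{1}{1-4\alpha}\eta^{1-4\alpha}\big)$ uses $\alpha\neq\tfrac14$, the splitting $\eta^{1-4\alpha}=\eta^{-2\alpha}\cdot\eta^{1-2\alpha}$ makes Cauchy--Schwarz reproduce exactly $A^{1/2}$, and the final pointwise bound $\eta^{2-4\alpha}\leq\|\eta\|_{L^\infty}^{2(1-2\alpha)}$ uses $\alpha<\tfrac12$. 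You were right to flag the finiteness of $A$ as the delicate point for the absorption: positivity plus the embedding $W^{2,2}_\sharp(0,L)\hookrightarrow C^1$ gives $\min\eta>0$ and hence $A<\infty$, and your regularisation with the weight $(\eta+\delta)^{-4\alpha}$ together with monotone convergence removes any quantitative dependence on $\min\eta$ from the argument --- which matters here, since in the application the whole point is that the constant must not depend on how close $\eta$ comes to zero. Two minor remarks: your explicit constant $9/(1-4\alpha)^2$ blows up as $\alpha\to\tfrac14$, which is consistent with the exclusion of $\alpha=\tfrac14$ (and harmless for the paper, which uses $\alpha\in(\tfrac{5}{12},\tfrac12)$); and the integration by parts is legitimate in the weak setting since $(\partial_x\eta)^3\in W^{1,1}_\sharp$ and $\eta^{1-4\alpha}\in C^1$ once $\eta$ is bounded below, exactly as you argue.
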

 The following estimates on the derivatives of $\psi$ help us to understand the requirement of the assumption on the integrability in \eqref{reg:ass}, see \eqref{eq:ass1306} below.
\begin{proposition}
Let $\psi$ be defined as in \eqref{def:psi}, $\alpha\in (0,\frac{1}{2})\setminus \{\frac{1}{4}\}$. For $2<p<8/3$, we have
\begin{equation}\label{new:8125}
\|\partial_y\psi(\cdot, t)\|_{L^p(\Omega_\eta)}^p \lesssim \|\partial_x^2\eta\|_{L^2(0,L)}^{p(1-\alpha)}\bigg(1+\int_0^L\tfrac{1}{\eta}\,\dd x\bigg)\quad \forall\ t\in (0,T),
\end{equation}
\begin{equation}\label{new:psix}
\|\partial_x\psi(\cdot, t)\|_{L^p(\Omega_\eta)}^p \lesssim \|\partial_x\eta\|_{L^{\infty}(0,L)}^{2p-4}\|\partial_x^2\eta\|_{L^2(0,L)}^2  +\|\partial_{xx}\eta\|^{p}_{L^p(0,L)}\quad \forall\ t\in (0,T),
\end{equation}
where the hidden constant depends on $\|\eta\|_{L^\infty(0,L)} $
\end{proposition}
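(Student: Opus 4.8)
The plan is to compute the derivatives of $\psi(t,x,y)=\partial_x\eta(t,x)\,\psi_0(y/\eta(t,x))$ explicitly, then bound each resulting term by reducing the $y$-integral to a one-dimensional integral over $(0,L)$ via the substitution $z=y/\eta(t,x)$, and finally invoke Proposition \ref{prop8} (the Gagliardo--Nirenberg-type interpolation inequality of Grandmont--Hillairet) together with elementary Sobolev embeddings in one variable. Throughout, $t$ is fixed and all constants may depend on $\|\eta\|_{L^\infty(0,L)}$, which is harmless since $\eta$ is bounded (and the displacement is small compared to $L$).

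First I would treat \eqref{new:8125}. Since $\partial_y\psi=\partial_x\eta\cdot\psi_0'(y/\eta)\cdot\tfrac1\eta$ and $\psi_0'$ is bounded on $(0,1)$, we get pointwise $|\partial_y\psi|\lesssim |\partial_x\eta|/\eta$. Hence
\[
\|\partial_y\psi(t,\cdot)\|_{L^p(\Omega_\eta)}^p\lesssim \int_0^L\int_0^{\eta(t,x)}\frac{|\partial_x\eta|^p}{\eta^p}\,\dd y\,\dd x=\int_0^L\frac{|\partial_x\eta|^p}{\eta^{p-1}}\,\dd x.
\]
Now one writes $\tfrac{|\partial_x\eta|^p}{\eta^{p-1}}=\big(\tfrac{|\partial_x\eta|^4}{\eta^{4\alpha}}\big)^{p/4}\cdot\eta^{\,p\alpha-(p-1)}$ and applies Hölder in $x$ with exponents $4/p$ and $4/(4-p)$ (legitimate since $2<p<8/3<4$). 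The first factor is controlled by Proposition \ref{prop8}, which gives $\big(\int_0^L\tfrac{|\partial_x\eta|^4}{\eta^{4\alpha}}\big)^{p/4}\lesssim \|\partial_{xx}\eta\|_{L^2}^{p/2}\|\eta\|_{L^\infty}^{p(1-2\alpha)/2}$; the remaining factor is $\big(\int_0^L\eta^{\,r}\big)^{(4-p)/4}$ for a suitable power $r$, which reduces to $1+\int_0^L\eta^{-1}\,\dd x$ up to $\|\eta\|_{L^\infty}$-dependent constants once $\alpha$ and $p$ are chosen in the stated ranges. Matching the exponent of $\|\partial_{xx}\eta\|_{L^2}$ to $p(1-\alpha)$ forces a relabeling $\alpha\mapsto$ (something), i.e. one chooses the interpolation parameter in Proposition \ref{prop8} so that the bookkeeping produces exactly $p(1-\alpha)$; this is where the constraint $p<8/3$ and $\alpha\in(0,\tfrac12)\setminus\{\tfrac14\}$ enters.

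For \eqref{new:psix} I would differentiate $\psi$ in $x$: using $\partial_x\big(\psi_0(y/\eta)\big)=\psi_0'(y/\eta)\cdot(-y\partial_x\eta/\eta^2)$ one obtains two terms, $\partial_{xx}\eta\,\psi_0(y/\eta)$ and $-\partial_x\eta\,\psi_0'(y/\eta)\,y\partial_x\eta/\eta^2$. The first contributes $\int_0^L\int_0^\eta|\partial_{xx}\eta|^p|\psi_0|^p\lesssim \|\eta\|_{L^\infty}\|\partial_{xx}\eta\|_{L^p(0,L)}^p$ since $\psi_0$ is bounded, giving the $\|\partial_{xx}\eta\|_{L^p}^p$ summand. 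The second term is bounded pointwise (using $y\le\eta$ and boundedness of $\psi_0'$) by $|\partial_x\eta|^2/\eta$, so after the $y$-integration it contributes $\lesssim\int_0^L|\partial_x\eta|^{2p}/\eta^{p-1}\,\dd x\lesssim \|\partial_x\eta\|_{L^\infty}^{2p-4}\int_0^L|\partial_x\eta|^4/\eta^{p-1}\,\dd x$, and then Proposition \ref{prop8} with an admissible $\alpha$ (chosen so that $4\alpha=p-1$, which requires $p<3$, compatible with $p<8/3$) bounds this by $\|\partial_x\eta\|_{L^\infty}^{2p-4}\|\partial_{xx}\eta\|_{L^2}^2\|\eta\|_{L^\infty}^{2(1-2\alpha)}$. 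Collecting both contributions yields the claim.

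The main obstacle is the exponent bookkeeping: one must simultaneously (i) keep all Hölder exponents in $[1,\infty]$, which pins down the upper bound $p<8/3$ in the first estimate; (ii) choose the free interpolation parameter $\alpha$ in Proposition \ref{prop8} to land exactly on the advertised powers $p(1-\alpha)$ and $2$; and (iii) absorb every positive power of $\eta$ into an $\|\eta\|_{L^\infty}$-dependent constant while leaving precisely the factor $1+\int_0^L\eta^{-1}$ — note only a \emph{negative} power of $\eta$ survives as an honest integral, and the power must be exactly $-1$, which is what makes the stated ranges of $p$ sharp. Once these algebraic constraints are organized, each step is a routine application of Hölder's inequality and Proposition \ref{prop8}.
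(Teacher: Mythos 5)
Your proposal follows the paper's proof essentially step for step: compute the partials, reduce the $y$-integral by the substitution $z=y/\eta$, split off $|\partial_x\eta|^4/\eta^{4\alpha}$ and apply H\"older with exponents $4/p$, $4/(4-p)$ for the $\partial_y\psi$ bound, invoke Proposition \ref{prop8}, and handle $\partial_x\psi$ by isolating the two terms. The variations you introduce are cosmetic (for $\partial_x\psi$ you pick $\alpha=(p-1)/4\in(1/4,5/12)$ so that the remaining $\eta$-power vanishes, where the paper instead takes $\alpha\in(5/12,1/2)$ and absorbs a positive power of $\eta$; both are admissible for Proposition \ref{prop8}). Two of your comments deserve correction, though. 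The statement that ``the power must be exactly $-1$'' is not right: after the H\"older step the surviving exponent of $\eta^{-1}$ is $s=(p(1-\alpha)-1)\tfrac{4}{4-p}$, and the ranges of $p$ and $\alpha$ are chosen so that $0\le s<1$, after which one uses $\int_0^L\eta^{-s}\dd x\lesssim 1+\int_0^L\eta^{-1}\dd x$ for $s\in[0,1]$; no exact matching to $-1$ is needed, and the $p<8/3$ threshold comes from requiring $s<1$ for \emph{some} $\alpha<\tfrac12$, not from $s=1$. Second, you are right to be suspicious of the advertised exponent $p(1-\alpha)$: Proposition \ref{prop8} carries the fixed power $\|\partial_{xx}\eta\|_{L^2}^2$ regardless of $\alpha$, so after raising to the $p/4$ you obtain $\|\partial_{xx}\eta\|_{L^2}^{p/2}$ and no relabeling of $\alpha$ can change this; this discrepancy appears to be in the paper itself and is harmless, since in \eqref{eq:ass1306} the whole $\|\partial_x^2\eta\|_{L^2}$-prefactor is in any case absorbed into the hidden constant via the assumption $\eta\in L^\infty(I;W^{2,2}_\sharp(0,L))$.
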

\begin{proof}
At first, we determine the partial derivative of the stream function with respect to the spatial variables: For any $(t,x,y)\in I \times \Oeta$ we have
\begin{align*}
\partial_y\psi (t,x,y)&= \tfrac{\partial_x\eta (t,x)}{\eta (t,x)}\psi_0' \left(\tfrac{y}{\eta (t,x)}\right),\\
\partial_x\psi (t,x,y)&= \partial_{xx}\eta (t,x) \psi_0 \left(\tfrac{y}{\eta (t,x)}\right)-\tfrac{|\partial_x\eta (t,x)|^2}{|\eta (t,x)|^2}\psi_0' \left(\tfrac{y}{\eta (t,x)}\right)y.
\end{align*}
We consider $2<p<8/3$. We use H\"{o}lder's inequality and Proposition \ref{prop8} to obtain 
\begin{align*}
\|\partial_y\psi\|_{L^p(\Omega_\eta)}^p&=\int_0^L\int_0^{\eta}\tfrac{|\partial_x\eta(t,x)|^p}{\eta(t,x)^{p}}|\psi_0'(y/\eta(t,x))|^p\,\dd y\dx=\int_0^L\int_0^{1}\tfrac{|\partial_x\eta|^p}{\eta^{p-1}}|\psi_0'(z)|^p\,\dd z\dx\\&\lesssim\int_0^L\tfrac{|\partial_x\eta|^p}{\eta^{\alpha p}}\tfrac{1}{\eta^{p(1-\alpha)-1}}\dx \notag
\\&\lesssim\bigg(\int_0^{L}\tfrac{|\partial_x\eta|^4}{\eta^{4\alpha}}\dx\bigg)^{\frac{p}{4}}\bigg(\int_0^L\tfrac{1}{\eta^{(p(1-\alpha)-1)\frac{4}{4-p}}}\,\dd x\bigg)^{\frac{4-p}{4}} \notag\\&\lesssim \Big(\|\partial_x^2\eta\|_{L^2(0,L)}^2 \|\eta\|_{L^\infty(0,L)}^{2(1-2\alpha)}\Big)^{\frac{p}{4}}\bigg(\int_0^L\tfrac{1}{\eta^{(p(1-\alpha)-1)\frac{4}{4-p}}}\,\dd x\bigg)^{\frac{4-p}{4}}.
\end{align*}
 Observe that $\alpha<1/2$ and $2<p<8/3$ imply that $(p(1-\alpha)-1)\frac{4}{4-p} < 1$. Furthermore, we use Sobolev's embedding and boundedness of $\eta$ to get 
 \begin{align*}
\|\partial_y\psi\|_{L^p(\Omega_\eta)}^p &\lesssim\|\partial_x^2\eta\|_{L^2(0,L)}^{p(1-\alpha)}\bigg(\int_0^L\tfrac{1}{\eta}\,\dd x\bigg)^{\frac{4-p}{4}}\lesssim \|\partial_x^2\eta\|_{L^2(0,L)}^{p(1-\alpha)}\bigg(1+\int_0^L\tfrac{1}{\eta}\,\dd x\bigg).
 \end{align*}
 Similarly, we can compute
\begin{align*}
\|\partial_x\psi\|_{L^p(\Omega_\eta)}^p&\lesssim\int_0^L\int_0^{\eta}\left(\tfrac{|\partial_{x}\eta|^{2p}}{\eta^{2p}}|y|^p |\psi_0'\left(\tfrac{y}{\eta}\right)|^p + |\partial_{xx}\eta|^{p} |\psi_0\left(\tfrac{y}{\eta}\right)|^p\right)\,\dd y\dx \notag  \\
&\lesssim\int_0^L\left(\tfrac{|\partial_x\eta|^{2p}}{\eta^{p-1}}+ |\partial_{xx}\eta|^{p} |\eta| \right)\dx \notag\\
&\lesssim\int_0^L\tfrac{|\partial_x\eta|^{4}}{\eta^{4\alpha }}\tfrac{|\partial_x\eta|^{2p-4}}{\eta^{-4\alpha+p-1}}\dx + \|\eta\|_{L^\infty(0,L)} \|\partial_{xx}\eta\|^{p}_{L^p(0,L)}.
\end{align*}
If we choose $\frac{5}{12}<\alpha<\frac{1}{2}$, then $p< 1+4\alpha$. We use Proposition \ref{prop8} to obtain
\begin{align*}
\|\partial_x\psi\|_{L^p(\Omega_\eta)}^p&\lesssim \|\partial_x\eta\|_{L^{\infty}(0,L)}^{2p-4}\|\eta\|_{L^{\infty}(0,L)}^{4\alpha+1-p}\|\partial_x^2\eta\|_{L^2(0,L)}^2 \|\eta\|_{L^\infty(0,L)}^{2(1-2\alpha)} + \|\eta\|_{L^\infty(0,L)} \|\partial_{xx}\eta\|^{p}_{L^p(0,L)}\notag\\
&\lesssim \|\partial_x\eta\|_{L^{\infty}(0,L)}^{2p-4}\|\partial_x^2\eta\|_{L^2(0,L)}^2 +\|\partial_{xx}\eta\|^{p}_{L^p(0,L)}
\end{align*}
using boundedness of $\eta$ and $4\alpha+1-p+2-4\alpha=3-p\geq0$.
\end{proof}

\subsection{The main estimate}
 We are now going to justify that $(\partial_x^2\eta,\bfw)$ is an admissible test-function in the weak formulation from Definition \ref{def:weakSolution} (d). Note also that $\bfw$ is divergence-free. Note that our assumptions do not imply that we have a strong solution to \eqref{eq:1}--\eqref{ini2}. However, we are now going to argue that the momentum equation is satisfied a.a. in $I\times\Omega_\eta$. Consider an arbitrary point $(t_\star,x_\star)\in I\times\Omega_\eta$ and a parabolic cylinder $I_\star\times B_\star\Subset I\times\Omega_\eta$ containing $(t_\star,x_\star)$. For $\bfphi\in C_{c,\Div}^\infty(I_\star\times B_\star)$, we clearly have
\begin{equation*}
\begin{aligned}
&\int_{I_\star}  \frac{\mathrm{d}}{\dt}
\int_{B_\star}\varrho\bu  \cdot {\bfphi}\,\dd(x,y)
\dt 
\\
&=\int_{I_\star}  \int_{B_\star}\big(  \varrho\bu\cdot \partial_t  {\bfphi} + \varrho\bu \otimes \bu: \nabla {\bfphi} 
 -  
\mathbb S(\nabla \bu):\nabla {\bfphi} +\varrho\bff\cdot{\bfphi} \big) \,\dd(x,y)\dt
 \end{aligned}
\end{equation*}
due to Definition \ref{def:weakSolution} (d) (choosing $\phi=0$). Here 
$$\mathbb S(\nabla\bfu)=\mu\big(\nabx\bu+(\nabx\bu)^\intercal-\Div\bu\, \mathbb I_{2\times 2}\big)+(2\mu+\lambda)\Div\bu\, \mathbb I_{2\times 2}$$
denotes the viscous stress tensor. By assumption we have
\begin{align*}
\partial_t(\varrho\bfu)+\Div\big(\varrho\bfu\otimes\bfu\big)\in L^1(I;L^q( \Omega_\eta))
\end{align*}
for some $q>\frac{8}{5}$
such that we can equivalently write
\begin{equation*}
\begin{aligned}
&\int_{I_\star}\int_{ B_\star}\big(\partial_t(\varrho\bu) +\Div\big(\varrho\bfu\otimes\bfu\big)\big) \cdot {\bfphi}\,\dd(x,y)
\dt 
=\int_{I_\star}  \int_{B_\star}\big(  
 -  
\mathbb S(\nabla \bu):\nabla {\bfphi} +\varrho\bff\cdot{\bfphi} \big) \,\dd(x,y)\dt.
 \end{aligned}
\end{equation*}
Hence, for a.a. $t\in I_\star$
\begin{align*}
\partial_t (\varrho\bu) +\Div\big(\varrho\bfu\otimes\bfu\big)=\Div\mathbb S(\nabla\bfu)+\varrho\bff
\end{align*}
as an equality in $\mathcal D'_{\Div}(B_\star)$. From this we obtain the existence of a distribution $\pi$ on $B_\star$ such that for a.a. $t\in I_\star$
\begin{align*}
\partial_t(\varrho\bu) +\Div\big(\varrho\bfu\otimes\bfu\big)=\Div\mathbb S(\nabla\bfu)-\nabla \pi+\varrho\bff
\end{align*}
in $\mathcal D'_{\Div}(B_\star)$. Since
$\mathbb S(\nabla\bfu)\in L^2(I_\star\times B_\star)$ we must have
$\pi\in L^2(I_\star\times B_\star)$ as well. Note that $\pi$ is unique up to a constant.\footnote{However, this constant will be fixed for the global pressure due its appearance in the structure equation.} 
 Finally we can write
\begin{align}\label{eq:2707a}
\Div\bm{\tau}=\Div\big(\mathbb S(\nabla\bfu)-\pi\,\mathbb I_{2\times 2}\big)=\partial_t(\varrho\bu) +\Div\big(\varrho\bfu\otimes\bfu\big)-\varrho\bff
\end{align}
a.a.  $I_\star\times B_\star$ and, by arbitrariness of $(t_\star,x_\star)$, a.a. in $I\times\Omega_\eta$. In  particular, we  have
\begin{align*}
\Div\bm{\tau}\in L^1(I;L^q(\Omega_\eta)),
\end{align*}
which yields
\begin{align*}
\bm{\tau}\bfn_\eta\in L^1(I;W^{-1/q,q}(\partial\Omega_\eta))
\end{align*}
by the trace theorem \cite[page 51]{So} (note that $\partial\Omega_\eta$ is $C^2$ on account of Sobolev's embedding in 1D and the assumption $\eta\in L^\infty(I;W^{3,2}_\sharp(0,L))$).
Here $W^{-1/q,q}$ denotes the dual of $W^{1/q,q'}$. Now we multiply
\eqref{eq:2707a} by $\bfw$ and integrate by part on the left-hand side obtaining
\begin{align}\label{test1}
\begin{aligned}
\int_0^t\int_{\partial\Omega_\eta}\bm{\tau}\,\bfn_\eta\circ\bm{\varphi}_\eta^{-1}\bfw\,\dd\mathscr H^1\ds&=\mu\int_0^t\int_{\Omega_\eta}\nabla\bu:\nabla\bfw\,\dd(x,y)\ds\\
&+\int_0^t\int_{\Omega_\eta}\Div\bm{\tau}\cdot\bfw\,\dd(x,y)\ds.
\end{aligned}
\end{align}
Here the term on the left-hand side is understood in the distributional sense recalling that $\bfw|_{\partial\Omega}\in L^\infty(I;W^{1,2}(\partial\Omega))$ since $\bfw\circ\bfvarphi_\eta=\partial_t\eta\bfe_2$ on $(0,L)$.
On account of \eqref{eq:2707a} we can decouple the weak formulation obtaining
\begin{equation*}
\begin{aligned}
\int_0^L \partial_t \eta \, \phi \dx\bigg|_{0}^{t}
&=
\int_0^t \int_0^L \big(\partial_t \eta\, \partial_t\phi+
 g\, \phi-\partial_x^2\eta\,\partial_x^2 \phi \big)\dx\ds\\
&+\int_0^t\int_{\partial\Omega_\eta}\bm{\tau}\,\bfn_\eta\circ\bm{\varphi}_\eta^{-1}\phi\circ\bfvarphi_\eta^{-1}\,\dd\mathscr H^1\ds
 \end{aligned}
\end{equation*}
for all $\phi\in W^{1,2}(I;L^{2}_\sharp(0,L))\cap L^{2}(I;W^{2,2}_\sharp(0,L))$.
By smooth approximation (recall that we consider periodic boundary conditions on $(0,L)$) we can insert $\partial_x^2\eta$ such that
\begin{equation}
\begin{aligned}
\int_0^L \partial_t \eta \, \partial_x^2\eta \dx\bigg|_{0}^{t}
&=
\int_0^t \int_0^L \big(-|\partial_t \partial_x\eta|^2+
 g\, \partial_x^2\eta+|\Dely\partial_x\eta|^2\big)\dx\ds\\
&+\int_0^t\int_{\partial\Omega_\eta}\bm{\tau}\,\bfn_\eta\circ\bm{\varphi}_\eta^{-1}\partial_x^2\eta\circ\bfvarphi_\eta^{-1}\,\dd\mathscr H^1\ds.
 \end{aligned}\label{test3}
\end{equation}
Recall that we assume that $\eta\in L^\infty(I;W^{3,2}_\sharp(0,L))$ such that the last term is well-defined
although $\bm{\tau}\,\bfn_\eta\circ\bm{\varphi}_\eta^{-1}$ is only a distribution.
Combining \eqref{test1}--\eqref{test3} we obtain
\begin{align}\nonumber
\int_0^t\int_0^L|\partial_x^3\eta|^2\dx\ds-\int_0^L\partial_t\eta\,\partial_x^2\eta\dx\bigg|_0^t&=\int_0^t\int_0^L|\partial_t\partial_x\eta|^2\dx\ds-\int_0^t\int_0^Lg\,\partial_x^2\eta\dx\ds\\\nonumber
&+\int_0^t\int_{\Omega_\eta}\big(\partial_t(\varrho\bu)+\Div(\varrho\bu\otimes\bu)-\varrho\bff\big)\cdot\bfw\,\dd(x,y)\ds\\& + \mu\int_0^t\int_{\Omega_\eta}\nabla\bu:\nabla\bfw\,\dd(x,y)\ds.\label{eq:2350}
\end{align}
\begin{proof}[Proof of Theorem \ref{thm:dist}]
We rewrite the last term of \eqref{eq:2350} using the function
\begin{align}\label{def:pi}
\pi_\ast(t,x,y):=\partial_{xy}\psi(t,x,y)-\int_0^{x}\partial_y^3\psi(t,z,y)\,\dd z.
\end{align}
This leads us to
\begin{align}\label{nabunabw}
\int_0^t\int_{\Omega_\eta}\nabla\bu:\nabla\bfw\,\dd(x,y)\ds&=\int_0^t\int_{\Omega_\eta} \pi_\ast \Div\bu\,\dd(x,y)\ds+ \int_0^t\int_{\Omega_\eta}\big(\nabla\bfw-\pi_\ast\mathbb I_{2\times 2}\big):\nabla\bu\,\dd(x,y)\ds \notag\\
&= \int_0^t\int_{\Omega_\eta} \pi_\ast \Div\bu\,\dd(x,y)\ds + \int_0^t\int_{\partial\Omega_\eta}\big(\nabla\bfw-\pi_\ast\mathbb I_{2\times 2}\big)\bn_{\eta}\cdot\bu\,\dd\mathscr H^1\ds \notag\\ & -\int_0^t\int_{\Omega_\eta}\big(\Delta\bfw-\nabla\pi_\ast\big)\cdot\bfu\,\dd(x,y)\ds.
\end{align}
Computing
\begin{align*}
\Delta\bfw-\nabla\pi_\ast=\begin{pmatrix} -\partial_x^2\partial_y\psi-\partial_y^3\psi\\
\partial_x^3\psi+\partial_y^2\partial_x\psi
\end{pmatrix}-\begin{pmatrix} \partial_x^2\partial_y\psi-\partial_y^3\psi\\
\partial_x\partial_y^2\psi
\end{pmatrix}=\begin{pmatrix} -2\partial_x^2\partial_y\psi\\
\partial_x^3\psi
\end{pmatrix},
\end{align*}
where we used that
\begin{align}\label{eq:1006}
\begin{aligned}
\int_0^{x}\partial_y^3\psi(t,z,y)\,\dd z&=12\int_0^x\frac{\partial_x\eta}{\eta^3}(t,z)\,\dd z=-6\eta^{-2}(t,z)\Big|_{z=0}^{z=x}\\
&=6\eta^{-2}(t,x)-6\eta_0^{-2}(x)
\end{aligned}
\end{align}
is independent from $y$, we get
\begin{align}\label{eq:1006a}
\begin{aligned}
-\int_0^t\int_{\Omega_\eta}\big(\Delta\bfw-\nabla\pi_\ast\big)\cdot\bfu\,\dd(x,y)\ds&=\int_0^t\int_{\Omega_\eta}\big(2\partial_x^2\partial_y\psi \,u^1-\partial_x^3\psi\,u^2\big)\,\dd(x,y)\ds\\
&=-\int_0^t\int_{\Omega_\eta}\big(2\partial_x^2\psi \,\partial_yu^1-\partial_x^2\psi\,\partial_xu^2\big)\,\dd(x,y)\ds\\
&+\int_0^t\int_{\partial\Omega_\eta}\big(2\partial_x^2\psi \,u^1\,n^2_\eta-\partial_x^2\psi\,u^2\,n^1_\eta\big)\,\dd\mathscr H^1\ds\\
&=-\int_0^t\int_{\Omega_\eta}\partial_x^2\psi\,\big(2\partial_yu^1-\partial_xu^2\big)\,\dd(x,y)\ds\\
&+\int_0^t\int_{0}^L\partial_x^2\psi(s,x,\eta(s,x))\partial_t\eta(s,x)\partial_x\eta(s,x)\,\dd x\ds.
\end{aligned}
\end{align}
Similarly,
using that
\begin{align*}
\nabla\bfw-\pi_\ast\mathbb I_{2\times 2}&=\begin{pmatrix} -\partial_x\partial_y\psi&-\partial_y^2\psi\\
\partial_x^2\psi&\partial_y\partial_x\psi
\end{pmatrix}-\begin{pmatrix} \partial_x\partial_y\psi-\int_0^{x}\partial_y^3\psi(t,z,y)\,\dd z&0\\
0&\partial_x\partial_y\psi-\int_0^{x}\partial_y^3\psi(t,z,y)\,\dd z
\end{pmatrix}\\
&=\begin{pmatrix} -2\partial_x\partial_y\psi+\int_0^{x}\partial_y^3\psi(t,z,y)\,\dd z&-\partial_y^2\psi\\
\partial_x^2\psi&\int_0^{x}\partial_y^3\psi(t,z,y)\,\dd z
\end{pmatrix},
\end{align*}
 it holds
\begin{align}\label{eq:1006b}
\begin{aligned}
\int_0^t\int_{\partial\Omega_\eta}\big(\nabla\bfw-\pi_\ast\mathbb I_{2\times 2}\big)\bn_{\eta}\cdot\bu\,\dd\mathscr H^1\ds&=-\int_0^t\int_0^L\partial_t\eta(s,x)\partial_x\eta(s,x)\partial_x^2\psi(s,x,\eta(s,x))\dx\ds\\
&+\int_0^t\int_0^L\partial_t\eta\bigg(\int_0^x\partial_y^3\psi(t,z,\eta)\,\dd z\bigg)\dx\ds
\end{aligned}
\end{align}
Noting the cancellation from \eqref{eq:1006a} and \eqref{eq:1006b}, we have shown
\begin{multline*}
\int_0^t\int_{\partial\Omega_\eta}\big(\nabla\bfw-\pi_\ast\mathbb I_{2\times 2}\big)\bn_{\eta}\cdot\bu\,\dd\mathscr H^1\ds -\int_0^t\int_{\Omega_\eta}\big(\Delta\bfw-\nabla\pi_\ast\big)\cdot\bfu\,\dd(x,y)\ds \\
=-\int_0^t\int_{\Omega_\eta}\partial_x^2\psi\,\big(2\partial_yu^1-\partial_xu^2\big)\,\dd(x,y)\ds +\int_0^t\int_0^L\partial_t\eta\bigg(\int_0^x\partial_y^3\psi(t,z,\eta)\,\dd z\bigg)\dx\ds .
\end{multline*}
If we use \eqref{eq:1006}, we can rewrite the above relation as 
\begin{align}\nonumber
\int_0^t\int_{\partial\Omega_\eta}&\big(\nabla\bfw-\pi_\ast\mathbb I_{2\times 2}\big)\bn_{\eta}\cdot\bu\,\dd\mathscr H^1\ds -\int_0^t\int_{\Omega_\eta}\big(\Delta\bfw-\nabla\pi_\ast\big)\cdot\bfu\,\dd(x,y)\ds \\
\nonumber&=-\int_0^t\int_{\Omega_\eta}\partial_x^2\psi\,\big(2\partial_yu^1-\partial_xu^2\big)\,\dd(x,y)\ds +6\int_0^L\bigg(\eta^{-1}_0(x)-\eta^{-1}(t,x)\bigg)\dx\\&- \int_0^t\int_0^L\partial_t\eta\eta^{-2}_0(x)\dx\ds \label{2231}
\end{align}
We use the relations \eqref{nabunabw}, \eqref{2231} and the definition of $\pi_{*}$ \eqref{def:pi} in the equation \eqref{eq:2350} and we finally end up with
\begin{align}\nonumber
6\mu\int_0^L\eta^{-1}(t,x)\dx
&=6\mu\int_0^L\eta^{-1}_0(x)\dx - \mu\int_0^t\int_0^L\partial_t\eta\eta^{-2}_0(x)\dx\ds \\\nonumber&+\int_0^t\int_0^L|\partial_x^3\eta|^2\dx\ds-\int_0^L\partial_t\eta\,\partial_x^2\eta\dx\bigg|_0^t  -\int_0^t\int_0^L|\partial_t\partial_x\eta|^2\dx\ds\\\nonumber& -\int_0^t\int_{\Omega_\eta}\big(\partial_t(\varrho\bu)+\Div(\varrho\bu\otimes\bu)-\varrho\bff\big)\cdot\bfw\,\dd(x,y)\ds \\\nonumber&- \mu\int_0^t\int_{\Omega_\eta}\partial_x^2\psi\,\big(2\partial_yu^1-\partial_xu^2\big)\,\dd(x,y)\ds \\&+ \mu\int_0^t\int_{\Omega_\eta}\Big(\partial_{xy}\psi\Div\bu + 6(\eta^{-2}(t,x)-\eta_0^{-2}(x))\Div\bu \Big)\,\dd(x,y)\ds  \nonumber\\
&=:({\tt I})+\dots+({\tt VIII}).\label{eq:final}
\end{align}
We know from the Definition \ref{def:weakSolution} that
$$\eta\in W^{1,\infty}(0,T;L^2_\sharp(0,L))\cap L^\infty(0,T;W^{2,2}_\sharp(0,L)).$$
It ensures that we can bound $({\tt II})$ and $({\tt IV})$ on the right-hand side of \eqref{eq:final}. Moreover, in order to bound $({\tt III})$ and $({\tt V})$ on the right-hand side of \eqref{eq:final}, we need
\begin{align}\label{reg-eta}
\eta\in L^{\infty}(0,T;W^{3,2}_\sharp(0,L)))\cap W^{1,2}(0,T;W^{1,2}_\sharp(0,L)),
\end{align}
which is part of our assumption.
We consider the function $\psi$ as in \eqref{def:psi}. We have 
\begin{equation*}
\partial_y \psi (t,x,y)= \frac{\partial_x \eta (t,x)}{\eta(t,x)} \psi'_0 \Big(\frac{y}{\eta(t,x)}\Big) ,
\end{equation*}
\begin{equation*}
\partial_{xy} \psi (t,x,y)= \frac{\partial_{xx} \eta (t,x)}{\eta(t,x)} \psi'_0 \Big(\frac{y}{\eta(t,x)}\Big)-\frac{|\partial_x \eta(t,x)|^2}{|\eta(t,x)|^2}\psi'_0 \Big(\frac{y}{\eta(t,x)}\Big)- \frac{|\partial_x \eta(t,x)|^2}{|\eta(t,x)|^3} y \psi''_0  \Big(\frac{y}{\eta(t,x)}\Big) .
\end{equation*}
Observe that 
\begin{align}\label{psixy}
&\int_{\Omega_\eta} |\partial_{xy}\psi| \,\dd(x,y) \notag\\&\leq \int_{0}^{L}\int_{0}^{\eta(t,x)} \left|\frac{\partial_{xx} \eta (t,x)}{\eta(t,x)} \psi'_0 \Big(\frac{y}{\eta(t,x)}\Big)\right|\,\dd(x,y) + \int_{0}^{L}\int_{0}^{\eta(t,x)} \left|\frac{|\partial_x \eta(t,x)|^2}{|\eta(t,x)|^3}y \psi''_0 \Big(\frac{y}{\eta(t,x)}\Big)\right|\,\dd(x,y) \notag\\ &+ \int_{0}^{L}\int_{0}^{\eta(t,x)} \left|\frac{|\partial_x \eta(t,x)|^2}{|\eta(t,x)|^2}\psi'_0 \Big(\frac{y}{\eta(t,x)}\Big)\right|\,\dd(x,y) \notag\\ &\lesssim\int_{0}^{L}\Big( \left|{\partial_{xx} \eta (t,x)} \right|+\frac{|\partial_x \eta(t,x)|^2}{|\eta(t,x)|}\Big) \,\dd x .
\end{align}
We use the pointwise estimate
\begin{equation}\label{prop6}
|\partial_x\eta(x)|^2 \leq C \|\partial_{xxx}\eta\|_{L^2 (0,L)}|\eta(x)|,\quad \forall\ x\in (0,L),
\end{equation}
which follows from the one dimensional Sobolev embedding $W^{2,2}_\sharp(0,L)\hookrightarrow L^\infty_\sharp(0,L)$.
Combining \eqref{psixy} and \eqref{prop6}, we conclude
\begin{equation}
\int_{\Omega_\eta} |\partial_{xy}\psi| \,\dd(x,y) \leq C (\|\partial_{xx} \eta\|_{L^2(0,L)}+\|\partial_{xxx} \eta\|_{L^2(0,L)}).
\end{equation}
Now using the regularity of $\eta$ in \eqref{reg-eta} and $\Div\bu\in L^1(0,T;L^{\infty}(\Omega_\eta))$, the first term in $({\tt VIII})$ on the right-hand side of \eqref{eq:final}, i.e. $\int_0^t\int_{\Omega_\eta}\partial_{xy}\psi\Div\bu\,\dd (x,y)\ds$ is bounded. Hence it holds
\begin{align*}
({\tt VIII})&\lesssim\,1+\int_0^t\|\Div\bfu\|_{L^\infty(\Omega_\eta)}\bigg(1+\int_0^L\frac{1}{\eta}\dx\bigg)\ds\\
&\lesssim\,1+\int_0^t\|\Div\bfu\|_{L^\infty(\Omega_\eta)}\bigg(\int_0^L\frac{1}{\eta}\dx\bigg)\ds
\end{align*}
using that $\min\eta_0>0$.
The regularity \eqref{reg-eta} of $\eta$ also implies that $\psi\in L^2(0,T;W^{2,2}(\Omega_\eta))$ by \cite[equ. (128)]{GraHil}. This, together with $\bfu\in L^2(I;W^{1,2}(\Omega_\eta))$ allows to control $({\tt VII})$.

Since $\bfw=(-\partial_y \psi, \partial_x \psi)$, we use the estimates \eqref{new:8125}--\eqref{new:psix} to obtain
\begin{align}\label{eq:ass1306}
\|\bfw\|_{L^p(\Omega_\eta)}^p=\|\partial_y \psi\|_{L^p(\Omega_\eta)}^p + \|\partial_x \psi\|_{L^p(\Omega_\eta)}^p \lesssim \bigg(1+\int_0^L\frac{1}{\eta}\dx\bigg).
\end{align}
Thus
\begin{align*}
({\tt VI})&=\bigg|\int_0^t\int_{\Omega_\eta}\big(\partial_t(\varrho\bu)+\Div(\varrho\bu\otimes\bu)-\varrho\bff\big)\cdot\bfw\,\dd(x,y)\ds\bigg|\\&\lesssim \int_0^t\|\partial_t(\varrho\bu)+\Div(\varrho\bu\otimes\bu)-\varrho\bff\|_{L^{p'}(\Omega_\eta)}\bigg(1+\int_0^L\frac{1}{\eta}\dx\bigg)\ds\\
&\lesssim 1+\int_0^t\|\partial_t(\varrho\bu)+\Div(\varrho\bu\otimes\bu)-\varrho\bff\|_{L^{p'}(\Omega_\eta)}\bigg(\int_0^L\frac{1}{\eta}\dx\bigg)\ds
\end{align*}
Observe that the above integral can be handled provided that
\begin{align*}
\partial_t(\varrho\bu)+\Div(\varrho\bu\otimes\bu)-\varrho\bff\in L^1(0,T;L^{p'}(\Omega_\eta))\mbox{  where  }\frac{8}{5}< p' < 2.
\end{align*}
Now by Gronwall's lemma, we conclude that there exists a constant $C_0$ such that 
\begin{equation*}
\sup_{t\in (0,T)} \|{\eta}^{-1}(t,\cdot)\|_{L^1_{\sharp} (0,L)} \leq C_0.
\end{equation*}
Moreover, we know from \cite[Proposition 7]{GraHil} that there exists a continuous function $D_{\min}: (0,\infty)\times \mathbb{R}^{+}\rightarrow (0,\infty)$ such that
\begin{equation*}
\sup_{t\in (0,T)} \|{\eta}^{-1}(t,\cdot)\|_{L^{\infty}_{\sharp} (0,L)} \leq D_{\min}\left(\|{\eta}^{-1}(t,\cdot)\|_{L^1_{\sharp} (0,L)},\ \|{\eta}(t,\cdot)\|_{H^2_{\sharp} (0,L)}\right).
\end{equation*}
It concludes the proof of Proposition \ref{thm:dist}.
\end{proof}
\section{Particular fluid models}\label{sec:part}
\subsection{Isentropic compressible fluids}
We consider the problem
\begin{align}
\label{eq:1a}\partial_t^2\eta + \partial_x^4\eta&=g- \be_2^\intercal\bm{\tau}\circ\bm{\varphi}_\eta(-\partial_x\eta \be_1+ \be_2)\quad
&\text{ for all }  (t,x)\in I\times (0,L),\\
 \label{eq:2a}\partial_t (\varrho\bu)  + \Div\big(\varrho\mathbf{u}\otimes\bu\big)
 &= 
 \mu\Delx \bu+\lambda\nabla\Div\bu -a\nabx \varrho^\gamma+ \varrho\bff \quad&\text{ for all }(t,x,y)\in I\times\Omega_\eta,\\
 \partial_t\varrho+\Div (\varrho\bu)&=0&\text{ for all }(t,x,y)\in I \times\Omega_\eta,\\
\label{eq:3a}\bu\circ \bfvarphi_\eta&=\partial_t\eta\be_2 \quad&\text{ for all } (t,x)\in I\times (0,L),
\end{align}
where $a>0$, $\gamma>1$ and the Cauchy-stress is given by
\begin{align*}
\bftau=\mu\big(\nabx\bu+(\nabx\bu)^\intercal-\Div\bu\, \mathbb I_{2\times 2}\big)+(2\mu+\lambda)\Div\bu\, \mathbb I_{2\times 2}-a\varrho^\gamma\mathbb I_{2\times 2}.
\end{align*}
\begin{definition}[Weak solution] \label{def:weakSolutionCNS}
Let $(\bff, g, \eta_0, \eta_*, \varrho_0,\bu_0)$ be a dataset such that \eqref{dataset} holds.
We call the triplet
$(\eta,\varrho,\bu)$
a weak solution to the system \eqref{eq:1a}--\eqref{eq:3a} with data $(\bff, g, \eta_0,  \eta_*,\varrho,\bu_0)$ provided that the following holds:
\begin{itemize}
\item[(a)] The structure displacement $\eta$ satisfies
\begin{align*}
\eta \in W^{1,\infty} \big(I; L^2_\sharp(0,L) \big)\cap  L^\infty \big(I; W^{2,2}_\sharp(0,L) \big) \quad \text{with} \quad \eta >0 \mbox{ a.e},
\end{align*}
as well as $\eta(0)=\eta_0$ and $\partial_t\eta(0)=\eta_*$.
\item[(b)] The velocity field $\bu$ satisfies
\begin{align*}
 L^2 \big(I; W^{1,2}(\Omega_{\eta}) \big) \quad \text{with} \quad 
\bu\circ\bfvarphi_\eta =\partial_t \eta {\bfe_2}\quad\text{on}\quad I\times(0,L).
\end{align*}
\item[(c)] The density $\varrho$ satisfies
\begin{align*}
\varrho\in C_{\mathrm w}(\overline I;L^\gamma(\Omega_\eta))
\end{align*}
as well as $\varrho(0)=\varrho_0$.
\item[(d)] The momentum $\varrho\bfu$ satisfies
\begin{align*}
\varrho\bfu\in C_{\mathrm w}(\overline I;L^{\frac{2\gamma}{\gamma+1}}(\Omega_\eta))
\end{align*}
as well as $\varrho\bfu(0)=\bfm_0$.
\item[(e)] For all $\psi\in C^\infty(\overline{I}\times\R^3)$ we have \begin{align}\label{eq:apvarrho0final}
\begin{aligned}
&\int_I\frac{\dd}{\dt}\int_{\Omega_{\eta}}\varrho \,\psi\dxt-\int_I\int_{\Omega_{\eta}}\Big(\varrho\partial_t\,\psi
+\varrho\bfu\cdot\nabla\psi\Big)\dxt=0.
\end{aligned}
\end{align}
\item[(f)]  For all tuple of test-functions $$(\phi, {\bfphi}) \in W^{1,2} \big(I; L^2_\sharp(0,L) \big)\cap  L^2 \big(I; W^{2,2}_\sharp(0,L) \big) \times W^{1,2}(I;L^\infty(\Omega_\eta))\cap L^\infty(I;W^{1,\infty}(\Omega_\eta))$$ 
with $\phi(T,\cdot)=0$, ${\bfphi}(T,\cdot)=0$ and $\bfphi\circ\bfvarphi_\eta =\phi {\bfe_2}$ on $I\times(0,L)$, we have
\begin{equation*}
\begin{aligned}
&\int_I  \frac{\mathrm{d}}{\dt}\bigg(\int_0^L \partial_t \eta \, \phi \dx
+
\int_{\Oeta}\varrho\bu  \cdot {\bfphi}\,\dd(x,y)
\bigg)\dt 
\\
&=\int_I  \int_{\Oeta}\Big(  \varrho\bu\cdot \partial_t  {\bfphi} + \varrho\bu \otimes \bu: \nabla {\bfphi} 
 -  
\mathbb S(\nabla \bu):\nabla {\bfphi} +a\varrho^\gamma\,\Div \bfphi+\varrho\bff\cdot{\bfphi} \Big) \,\dd(x,y)\dt
\\
&\quad+
\int_I \int_0^L \big(\partial_t \eta\, \partial_t\phi+
 g\, \phi-\partial_x^2\eta\,\partial_x^2 \phi \big)\dx\dt.
 \end{aligned}
\end{equation*}
\end{itemize}
\end{definition}

The following result is proved in \cite{BrSc} for the case of shells. A corresponding result for plates follows accordingly.
\begin{theorem}\label{thm:final}
Let $\gamma>1$.
There is a weak solution $(\eta,\varrho, \bfu)$ to \eqref{eq:1a}--\eqref{eq:3a} in the sense of Definition \ref{def:weakSolutionCNS}. Here, we have $I= (0,T_*)$, with $T_*<T$ only in case $\Omega_\eta(s)$ approaches a self intersection with $s\to T_*$.
The solution satisfies the energy estimate
\begin{align*}
\sup_{t\in I}\int_{\Omega_{\eta}}&\varrho|\bfu|^2\,\dd(x,y)+\sup_{t\in I}\int_{\Omega_{ \eta}}a\varrho^\gamma\,\dd(x,y)+\int_I\int_{\Omega_{\eta}}|\nabla\bfu|^2\dxt+\sup_{t\in I}\int_0^L|\partial_x^2\eta|^2\dx+\sup_{t\in I}\int_0^L|\partial_t\eta|^2\dx\\
\lesssim&\,\int_{\Omega}\frac{|\bfm_0|^2}{\varrho_0}\,\dd(x,y)+\int_{\Omega}a\varrho_0^\gamma\,\dd(x,y)+\int_I\|\bff\|_{L^\infty(\Omega_{\eta})}^2\dt+\int_I\norm{g}_{L^2(0,L)}^2\dt\\
+&\,\int_0^L\abs{\partial_x^2\eta_0}^2\dx+\int_0^L\abs{\eta_{*}}^2\dx,
\end{align*}
provided that $\eta_0,\eta_1,\varrho_0,\bfm_0,\bff$ and $g$ are regular enough to give sense to the right-hand side and that \eqref{dataset} is satisfied.
\end{theorem}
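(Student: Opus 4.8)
The result for a linearly elastic shell enclosing a three‑dimensional compressible fluid is the content of \cite{BrSc}; since the flat two‑dimensional plate geometry of \eqref{eq:1a}--\eqref{eq:3a} is entirely parallel (and in several respects simpler), I would only indicate the adaptation rather than reproduce the construction. The plan is to build weak solutions by a multi‑layered approximation scheme that couples a decoupling fixed point for the fluid--structure interaction with the Feireisl--Novotn\'y--Petzeltov\'a regularisation of the compressible Navier--Stokes equations, to derive the energy inequality uniformly at every approximation level, and then to pass to the limit; combining with lower semicontinuity then yields the asserted estimate for the limit solution.

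First I would set up the approximation scheme. One freezes the displacement $\eta$ in a bounded subset of the energy class $W^{1,\infty}(I;L^2_\sharp(0,L))\cap L^\infty(I;W^{2,2}_\sharp(0,L))$ with $\min\eta>0$, pulls the fluid equations back to the fixed reference domain $\Omega_{\eta_0}$ through the diffeomorphism $\bfvarphi_\eta$, and solves the transformed system at three regularisation levels at once: a Galerkin discretisation of the momentum equation (parameter $N$) in a basis of $W^{1,2}(\Omega_{\eta_0})$ transported to $\Omega_\eta$, an artificial viscosity $\varepsilon\Delta\varrho$ in the continuity equation (parameter $\varepsilon$), and an artificial pressure $\delta\varrho^\beta$ with $\beta$ large (parameter $\delta$); as in \cite{BrScF} it may be necessary to regularise the beam equation temporarily (parameter $\zeta$) to secure compactness of $\partial_t\eta$, and this regularisation is removed in the last limit. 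Evaluating the resulting fluid stress at the interface produces the forcing for the beam equation, solving which yields an updated displacement, and a Schauder fixed‑point argument in $C(\overline I\times[0,L])$ — whose compactness input is furnished by the a priori bounds — gives a fully coupled approximate solution.

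Next I would derive the energy estimate. Testing the approximate momentum equation with $\bfu$ and the beam equation with $\partial_t\eta$ and adding them, the interface integrals cancel thanks to the kinematic coupling $\bfu\circ\bfvarphi_\eta=\partial_t\eta\,\be_2$; using the continuity equation to turn $\int\varrho^\gamma\Div\bfu$ into a time derivative of the internal energy, what remains reproduces exactly the displayed inequality together with non‑negative dissipative contributions carrying the factors $\varepsilon$, $\delta$, $\zeta$ and the term $\int|\nabla\varrho^{\beta/2}|^2$, which are simply discarded. Gr\"onwall's lemma and the assumed integrability of $\bff$ and $g$ then give bounds on $\sup_t\int_{\Omega_\eta}(\varrho|\bfu|^2+\varrho^\gamma)$, on $\int_I\|\nabla\bfu\|_{L^2(\Omega_\eta)}^2$, and on $\eta$ in the energy class, all uniform in $N,\varepsilon,\delta,\zeta$.

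Finally I would pass to the limit, letting $N\to\infty$, $\varepsilon\to0$, $\delta\to0$ and $\zeta\to0$ in this order. The displacement is compact: from the uniform bounds and the equicontinuity in time one gets $\eta\to\eta$ in $C(\overline I;W^{s,2}_\sharp(0,L))$ for every $s<2$, hence uniformly on $\overline I\times[0,L]$, so the domains $\Omega_{\eta(t)}$ converge; since the kinematic coupling identifies $\partial_t\eta$ with a fractional‑order trace of $\bfu$, a suitable Aubin--Lions argument as in \cite{BrScF} moreover yields strong convergence of $\partial_t\eta$ in $L^2(I\times(0,L))$, which is what is needed to pass to the limit in the coupled weak formulation and in the trace relation. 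For the fluid one invokes the variable‑domain compactness machinery of \cite{BrScF}: strong convergence of $\varrho\bfu$ in $L^p(I;L^1(\Omega_\eta))$, weak convergence of $\bfu$ in $L^2(I;W^{1,2})$, and weak‑$*$‑in‑time continuity of $\varrho$ in $L^\gamma$ and of $\varrho\bfu$ in $L^{2\gamma/(\gamma+1)}$. The one genuinely hard step — as always for the compressible Navier--Stokes equations, and here the main obstacle — is the strong convergence of the density: one must establish the effective viscous flux identity on the time‑dependent domain and combine it with the renormalised continuity equation (using the artificial viscosity at the $\varepsilon$‑level and Feireisl's oscillation‑defect‑measure estimate at the $\delta$‑level) to deduce $\varrho\to\varrho$ almost everywhere; this is carried out exactly as in \cite[Sections 5--7]{BrSc}, the flat plate geometry causing no additional difficulty. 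Lower semicontinuity then transfers the energy estimate to the limit, and the construction runs for as long as $\Omega_{\eta(t)}$ remains a non‑degenerate subgraph, i.e. it fails only if $\inf_x\eta(s,x)\to0$ as $s\to T_*$ (a self‑intersection of the fluid domain), which is the asserted form of the maximal interval $I=(0,T_*)$ with $T_*<T$ only in that case.
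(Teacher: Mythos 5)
Your proposal is correct and takes exactly the route the paper itself takes: the paper dispenses with the proof entirely by citing \cite{BrSc} for the shell case and remarking that the plate case ``follows accordingly,'' and your opening sentence does precisely the same. The rest of your proposal is a faithful and accurate sketch of what the proof in \cite{BrSc} (and its Navier--Stokes--Fourier analogue \cite{BrScF}) actually contains — the multi-layer approximation scheme with Galerkin/artificial viscosity/artificial pressure parameters, the decoupling fixed point, the uniform energy estimate via the interface cancellation, the Aubin--Lions compactness for the trace and the effective viscous flux plus oscillation defect measure for strong convergence of the density, and the characterisation of the maximal existence interval by approach to self-intersection — so it is useful expository context, but it is not a different argument.
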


Theorem \ref{thm:main} implies immediately the following
\begin{corollary}\label{cor:1}
Suppose that the data satisfies \eqref{dataset}.
Let $(\eta, \varrho, \bu)$ be a weak solution to \eqref{eq:1a}--\eqref{eq:3a} in the sense of Definition \ref{def:weakSolutionCNS} that satisfies for some
$q>\frac{8}{5}$:
\begin{align}\label{reg:assa}
\begin{aligned}
 \partial_t(\varrho\bu),\,&\Div(\varrho\bu\otimes\bu),\,\varrho\bff\in L^1(0,T;L^q(\Omega_\eta)),\, \Div\bu\in L^1(0,T;L^{\infty}(\Omega_\eta)),\\
& \eta \in W^{1,2}(0,T;W^{1,2}_\sharp (0,L))\cap L^\infty(0,T;W^{3,2}_\sharp(0,L)). 
\end{aligned}
\end{align} 
Then there is no contact for $t<T$.
\end{corollary}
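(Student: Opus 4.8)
The plan is to obtain Corollary \ref{cor:1} as a direct consequence of Theorem \ref{thm:main} by checking that every weak solution to the isentropic system \eqref{eq:1a}--\eqref{eq:3a} in the sense of Definition \ref{def:weakSolutionCNS} is, in particular, a weak solution to the abstract (pressure-free) system \eqref{eq:1}--\eqref{ini2} in the sense of Definition \ref{def:weakSolution}, upon identifying the pressure as $\pi=a\varrho^\gamma$. Once this is in place, the regularity hypotheses \eqref{reg:assa} are literally those of \eqref{reg:ass}, so Theorem \ref{thm:main} applies verbatim and yields the absence of contact for $t<T$.

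First I would match the structural requirements. Items (a) and (b) of Definition \ref{def:weakSolution} coincide with items (a) and (b) of Definition \ref{def:weakSolutionCNS}, including the kinematic coupling $\bu\circ\bfvarphi_\eta=\partial_t\eta\,\be_2$ on $I\times(0,L)$. For item (c), the inclusion $\varrho\bu\in C_{\mathrm w}(\overline I;L^1(\Omega_\eta))$ follows from $\varrho\bu\in C_{\mathrm w}(\overline I;L^{2\gamma/(\gamma+1)}(\Omega_\eta))$ (item (d) of Definition \ref{def:weakSolutionCNS}) together with the continuous embedding $L^{2\gamma/(\gamma+1)}(\Omega_\eta)\hookrightarrow L^1(\Omega_\eta)$, which is available on the bounded domain $\Omega_\eta$ since $\gamma>1$; the inclusion $\varrho\bu\otimes\bu\in L^1(I;L^1(\Omega_\eta))$ is already implicit in Definition \ref{def:weakSolutionCNS}(f) (it is needed to give a meaning to the convective term there), and the initial datum $\varrho\bu(0)=\bfm_0$ carries over directly.

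The point requiring a small argument is the weak momentum--plate identity, item (d) of Definition \ref{def:weakSolution}: there the fluid test field $\bfphi$ ranges over the solenoidal class $W^{1,2}(I;L^\infty(\Omega_\eta))\cap L^\infty(I;W^{1,\infty}_{\Div}(\Omega_\eta))$, which is a subset of the class appearing in Definition \ref{def:weakSolutionCNS}(f). Restricting the identity in Definition \ref{def:weakSolutionCNS}(f) to this subclass, the pressure contribution $a\varrho^\gamma\,\Div\bfphi$ drops out and, using the boundary conditions \eqref{eq:3a} and \eqref{eq:2410} together with periodicity in $x$, the viscous term $\mathbb S(\nabla\bu):\nabla\bfphi$ reduces to $\mu\nabla\bu:\nabla\bfphi$ after an integration by parts (the same reduction underlies \eqref{test1} in the proof of Proposition \ref{thm:dist}); what remains is exactly the identity in Definition \ref{def:weakSolution}(d). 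Hence $(\eta,\varrho,\bu)$ is a weak solution in the sense of Definition \ref{def:weakSolution}, and Theorem \ref{thm:main} concludes the proof.

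I do not expect a genuine obstacle here: the whole content sits in Theorem \ref{thm:main}, and the reduction above only uses that the pressure is invisible against solenoidal test fields --- which is precisely the reason, noted after Theorem \ref{thm:main}, that no constitutive law for $\pi$ enters. The one point worth recording is that, since the continuity equation \eqref{eq:apvarrho0final} is part of Definition \ref{def:weakSolutionCNS}, the maximum principle quoted in the remark after Theorem \ref{thm:main} gives $\varrho\in L^\infty(I\times\Omega_\eta)$ whenever it is so initially, so that the hypothesis $\varrho\bff\in L^1(0,T;L^q(\Omega_\eta))$ in \eqref{reg:assa} is automatically implied by, e.g., $\bff\in L^1(0,T;L^q(\Omega_\eta))$; this is a convenience rather than a necessity.
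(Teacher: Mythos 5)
Your reduction is exactly the one the paper intends: a weak solution in the sense of Definition \ref{def:weakSolutionCNS} is a weak solution in the sense of Definition \ref{def:weakSolution} (restrict to solenoidal test fields so that the pressure term $a\varrho^\gamma\Div\bfphi$ and all $\Div\bu\,\Div\bfphi$ contributions drop, and note that $\int_{\Omega_\eta}(\nabla\bu)^\intercal:\nabla\bfphi$ collapses to a boundary integral over $\Gamma_\eta$ that vanishes because the tangential derivative of $\phi^1\equiv 0$ on $\Gamma_\eta$ together with $\Div\bfphi=0$ force $-\partial_x\eta\,\partial_y\phi^1+\partial_y\phi^2=0$ there, while $\bu=0$ on the bottom), after which Theorem \ref{thm:main} applies verbatim. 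This is precisely the content of the paper's one-line derivation of Corollary \ref{cor:1}, and your supplementary checks of items (c)--(d) of Definition \ref{def:weakSolution} and the remark about the maximum principle are correct.
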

We want to provide some examples of fluid-structure systems where the assumptions \eqref{reg:assa} will fulfil. 
Let the initial data satisfies: \begin{gather*}
\eta_{0} \in W^{4,2}_\sharp(0,L), \quad \min \eta_{0} > 0,\quad \eta_{*} \in W^{3,2}_\sharp(0,L), \label{ini-1}\\
 {\varrho}_{0} \in W^{3,2}(\Omega_{\eta_0}), \quad \min {\varrho}_{0} > 0, \label{ini-2}\\
{\bu}_{0} \in W^{3,2}(\Omega_{\eta_0}), \quad  {\bu}_{0}  = 0 \mbox{ on } (0,L)\times \{0\}, \quad  
{\bu}_{0}\left(x, \eta_0(x)\right) = \eta_{*} (x) e_{2} \quad x\in (0,L). \label{ini-3}
\end{gather*}
Under these assumptions it is shown in \cite[Theorem 1.1]{MRT} that there exists  $T > 0$ such that the system \eqref{eq:2a}--\eqref{eq:3a} with \eqref{eq:1a} replaced by
\begin{equation*}
\partial_t^2\eta - \partial_x^2\eta=g- \be_2^\intercal\bm{\tau}\circ\bm{\varphi}_\eta(-\partial_x\eta \be_1+ \be_2) \quad
\text{ for all }  (t,x)\in I\times (0,L)
\end{equation*}
admits a unique strong solution $(\eta,{\varrho}, {\bfu})$ satisfying
\begin{equation*}
 \varrho \in W^{1,2}(0,T;W^{3,2}(\Omega_\eta))\cap W^{1,\infty}(0,T;W^{2,2}(\Omega_\eta)),
\end{equation*}
\begin{equation*}
 \bu \in L^{2}(0,T;W^{4,2}(\Omega_\eta))\cap C([0,T);W^{3,2}(\Omega_\eta)) 
\cap  H^{1}(0,T;W^{2,2}(\Omega_\eta)) 
\cap W^{2,2}(0,T;L^{2}(\Omega_\eta)), 
\end{equation*}
$$
\eta \in  L^{\infty}(0,T;W^{4,2}_\sharp(0,L)) \cap W^{2,2}(0,T;W^{2,2}_\sharp(0,L)) \cap W^{3,2}(0,T;L^{2}_\sharp(0,L)) \cap W^{1,\infty}(0,T;W^{3,2}_\sharp(0,L)).
$$
Clearly, a strong solution satisfies the requirements \eqref{reg:assa}. Similarly, we can also verify that if we consider the system \eqref{eq:2a}--\eqref{eq:3a} with \eqref{eq:1a} replaced by
\begin{equation*}
\partial_t^2\eta + \partial_x^4\eta - \partial_t\partial_x^2\eta=g- \be_2^\intercal\bm{\tau}\circ\bm{\varphi}_\eta(-\partial_x\eta \be_1+ \be_2) \quad
\text{ for all }  (t,x)\in I\times (0,L),
\end{equation*}
then according to \cite[Theorem 1.7]{Mi}, under certain assumptions of initial data, the strong solution $(\eta,{\varrho}, {\bfu})$ satisfies the requirements \eqref{reg:assa}.

\subsection{Heat-conducting compressible fluids}\label{sec:heat}
We consider the problem
\begin{align}
\label{eq:1A}\partial_t^2\eta + \partial_x^4\eta&=g- \be_2^\intercal\bm{\tau}\circ\bm{\varphi}_\eta(-\partial_x\eta \be_1+ \be_2)\quad
&\text{ for all }  (t,x)\in I\times (0,L),\\
 \label{eq:2A}\partial_t (\varrho\bu)  + \Div\big(\varrho\mathbf{u}\otimes\bu\big)
 &= 
 \mu\Delx \bu+\lambda\nabla\Div\bu -a\nabx (\varrho\vartheta)+ \varrho\bff \quad&\text{ for all }(t,x,y)\in I\times\Omega_\eta,\\
 \partial_t\varrho+\Div (\varrho\bu)&=0&\text{ for all }(t,x,y)\in I \times\Omega_\eta,\\\label{eq:3A}
c_v \partial_t(\varrho\vartheta)+\Div (\varrho\vartheta\bu)&=\kappa\Delta\vartheta+\bftau:\nabla\bfu+\varrho H&\text{ for all }(t,x,y)\in I \times\Omega_\eta,\\
\label{eq:4A} \bu\circ \bfvarphi_\eta &=\partial_t\eta (t,x)\be_2
 \quad&\text{ for all } (t,x)\in I\times (0,L),\\
 \bu (t,x,0)&=0 \quad&\text{ for all } (t,x)\in I\times (0,L),\\
\label{eq:5A} \nabla\vartheta\cdot\bfn_\eta\circ\bfvarphi_\eta^{-1} &=0\quad&\text{ for all } (t,x,y)\in I\times \partial\Omega_\eta,
\end{align}
where the Cauchy-stress is given by
\begin{align*}
\bftau&=\mu\big(\nabx\bu+(\nabx\bu)^\intercal-\Div\bu\, \mathbb I_{2\times 2}\big)+(2\mu+\lambda)\Div\bu\, \mathbb I_{2\times 2}-a\varrho\vartheta\mathbb I_{2\times 2}\\
&=\mathbb S(\nabla\bfu)-a\varrho\vartheta\mathbb I_{2\times 2}
\end{align*}
and $a,c_v,\kappa$ are positive constants. The function $H:\R^2\rightarrow [0,\infty)$ represents some external heat source.

The system is completed with the following initial conditions
\begin{equation}\label{iniheat1}
\eta(0,x)=\eta_0(x)\mbox{ and }\partial_t\eta(0,x)=\eta_*(x) \mbox{ for } x\in (0,L), 
\end{equation}
\begin{equation}\label{iniheat2A}
 \varrho(0,x,y)=\varrho_0(x,y),\ \bu (0,x,y)=\bu_0(x,y),\ \vartheta (0,x,y)=\vartheta_0 (x,y) \mbox{ for }(x,y)\in (0,L)\times (0,\eta_0(x)). 
\end{equation}
In order to define a weak solution one can proceed as in
Definition \ref{def:weakSolutionCNS} as far as the first three equation are concerned. However, equation \eqref{eq:4A} is considered in terms of the specific entropy $s=\log \vartheta^{c_v}-\log(\varrho)$ whose balance reads as
\begin{align*}
c_v \partial_t(\varrho s)+\Div (\varrho s\bu)&=\frac{1}{\vartheta}\Big(\frac{\kappa|\nabla\vartheta|^2}{\vartheta}+\mathbb S(\nabla\bfu):\nabla\bfu\Big)+\varrho \frac{H}{\vartheta}\quad\text{ for all }(t,x,y)\in I \times\Omega_\eta.
\end{align*}
On level of strong solutions one can substitute \eqref{eq:3A} by it and obtains an equivalent problem.
In the weak formulation, where this is not the case anymore, the specific entropy balance is formulate as an inequality, i.e.,
\begin{align} \label{m217*final1}\begin{aligned}
 \int_I \frac{\dd}{\dt}\int_{\Omega_{\eta}} \vr s \, \psi,\dd(x,y)\dt
 &- \int_I \int_{\Omega_{\eta}} \big( \vr s\partial_t \psi + \varrho s \bfu \cdot \nabla\psi \big)\,\dd (x,y)\dt
\\& \geq\int_I\int_{\Omega_{\eta}}
\frac{1}{\vartheta}\Big(\mathbb S(\nabla\bfu):\nabla\bfu+\frac{\kappa}{\vartheta}|\nabla\vartheta|^2\Big)
\psi\,\dd (x,y)\dt \\
&+ \int_I\int_{\Omega_{\eta}} \frac{\kappa\nabla\vartheta}{\vartheta} \cdot \nabla\psi \,\dd(x,y)\dt+ \int_I \int_{\Omega_{\eta}} \frac{\vr}{\vt} H \psi  \,\dd(x,y)\dt
\end{aligned}
\end{align}
holds for all
$\psi\in C^\infty(\overline I\times \R^3)$ with $\psi \geq 0$.
 Similar to Theorem \ref{thm:final} the existence of a weak solution to \eqref{eq:1A}--\eqref{eq:5A} is shown in \cite{BrScF}, where $\vartheta\in L^2(I;W^{1,2}(\Omega_\eta))$ is non-negative. Note, however, that the constitutive relations for pressure and internal energy differ from those in \eqref{eq:1A}--\eqref{iniheat2A} and the viscosity coefficients must depend on $\vartheta$.

We obtain the following corollary from Theorem \ref{thm:main} which is completely independent form the temperature $\vartheta$.
\begin{corollary}\label{cor:1}
Suppose that the data satisfies \eqref{dataset}, that $\vartheta_0\in L^4(\Omega_{\eta_0})$ and $H\in L^\infty_{\mathrm loc}(I\times\R^2)$ with $\vartheta_0,H\geq0$.
Let $(\eta,\varrho,\bu,\vartheta)$ be a weak solution to \eqref{eq:1A}--\eqref{iniheat2A} that satisfies \eqref{reg:assa}. Then there is no contact for $t<T$.
\end{corollary}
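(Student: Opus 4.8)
The plan is to deduce the statement directly from Theorem~\ref{thm:main}: I would show that the triplet $(\eta,\varrho,\bu)$ underlying any weak solution $(\eta,\varrho,\bu,\vartheta)$ of the heat-conducting system \eqref{eq:1A}--\eqref{iniheat2A} is \emph{automatically} a weak solution of the abstract, pressure-free system \eqref{eq:1}--\eqref{ini2} in the sense of Definition~\ref{def:weakSolution}. Since the additional regularity \eqref{reg:assa} assumed here is nothing but the hypothesis \eqref{reg:ass} of Theorem~\ref{thm:main}, nothing has to be re-proved about the distance estimate; the entire content is the above identification.

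First I would check conditions (a)--(c) of Definition~\ref{def:weakSolution}. Conditions (a) on the structure displacement and (b) on the velocity field, including the kinematic coupling $\bu\circ\bfvarphi_\eta=\partial_t\eta\,\bfe_2$ and the attainment of the initial data, are part of the notion of weak solution used for \eqref{eq:1A}--\eqref{iniheat2A} (which is modelled on Definition~\ref{def:weakSolutionCNS}) and carry over verbatim. For (c) I would invoke the total energy balance satisfied by the weak solution: it controls $\varrho|\bu|^2$ in $L^\infty(I;L^1(\Omega_\eta))$, and together with conservation of mass this yields, via $|\varrho\bu|\le\tfrac{1}{2}(\varrho+\varrho|\bu|^2)$ and $|\varrho\bu\otimes\bu|\le\varrho|\bu|^2$, that $\varrho\bu\in C_{\mathrm{w}}(\overline I;L^1(\Omega_\eta))$ and $\varrho\bu\otimes\bu\in L^1(I;L^1(\Omega_\eta))$; the compatibility $\bfm_0/\varrho_0\circ\bfvarphi_{\eta_0}=\eta_*\bfe_2$ follows from $\varrho(0)=\varrho_0$, $\bu(0)=\bu_0$ and $\bu_0(x,\eta_0(x))=\eta_*(x)\bfe_2$.

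The decisive --- and really the only --- structural observation is that condition (d) of Definition~\ref{def:weakSolution} is obtained from the weak momentum balance of \eqref{eq:1A}--\eqref{iniheat2A} simply by restricting the test functions $\bfphi$ to be divergence-free: the pressure contribution, which for this model equals $a\varrho\vartheta\,\Div\bfphi$, then vanishes identically, and, using that $\int_{\Omega_\eta}\mathbb S(\nabla\bu):\nabla\bfphi=\mu\int_{\Omega_\eta}\nabla\bu:\nabla\bfphi$ for such $\bfphi$ in the coupled formulation, the remaining terms are precisely those appearing in (d). In particular the temperature $\vartheta$, the internal-energy equation \eqref{eq:3A} and its weak surrogate the entropy inequality, and the heat source $H$ never enter the argument --- which is exactly why the conclusion is insensitive to the thermal part of the model. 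With $(\eta,\varrho,\bu)$ thus identified as a weak solution in the sense of Definition~\ref{def:weakSolution} satisfying \eqref{reg:assa}, Theorem~\ref{thm:main} applies and yields the absence of contact for $t<T$. The only mildly delicate point in the whole argument is the bookkeeping in the first step --- matching the function-space conventions of the heat-conducting weak-solution concept to (a)--(c) and confirming that $\varrho\bu\in C_{\mathrm{w}}(\overline I;L^1(\Omega_\eta))$ and $\varrho\bu\otimes\bu\in L^1(I;L^1(\Omega_\eta))$ genuinely follow from the available a priori energy bounds; everything else is immediate.
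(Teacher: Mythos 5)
Your proposal is correct and takes essentially the same approach as the paper, which offers no explicit proof of this corollary but simply observes that it ``follows from Theorem~\ref{thm:main}, which is completely independent from the temperature $\vartheta$.'' You have correctly filled in what the paper leaves implicit: restricting the momentum-equation test functions to divergence-free $\bfphi$ annihilates the pressure contribution $a\varrho\vartheta\,\Div\bfphi$, removes every trace of $\vartheta$, the entropy inequality, and $H$ from the formulation, and exhibits $(\eta,\varrho,\bu)$ as a weak solution in the sense of Definition~\ref{def:weakSolution}, to which Theorem~\ref{thm:main} then applies under hypothesis~\eqref{reg:assa}.

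One small caveat on a detail you state in passing: the identity $\int_{\Omega_\eta}\mathbb S(\nabla\bu):\nabla\bfphi = \mu\int_{\Omega_\eta}\nabla\bu:\nabla\bfphi$ is not literally valid for divergence-free $\bfphi$ --- the difference $\mu\int_{\Omega_\eta}(\nabla\bu)^\intercal:\nabla\bfphi$ reduces after integration by parts to a boundary term on the moving interface that does not vanish in general. This discrepancy between the symmetric-gradient viscous form and the full-gradient form $\mu\nabla\bu$ used in Definition~\ref{def:weakSolution}\,(d) is, however, equally present in the paper's own computation (cf.\ \eqref{test1}), where $\bm\tau:\nabla\bfw$ is replaced by $\mu\nabla\bu:\nabla\bfw$ without comment; so it is an ambiguity inherited from the source rather than a gap introduced by you. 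The remaining bookkeeping --- conditions (a)--(c) of Definition~\ref{def:weakSolution} from the energy bounds and $L^{2\gamma/(\gamma+1)}\hookrightarrow L^1$ --- is handled correctly.
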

We know the following local-in-time existence result for the system \eqref{eq:1A}--\eqref{iniheat2A} from \cite[Theorem 1.1]{MT}:
Let $2< p<\infty$, $3<q<\infty$, $\frac{1}{p}+\frac{1}{2q}\neq \frac{1}{2}$. Suppose the initial data satisfy:
\begin{gather*}
\eta_{0} \in B^{2(2-1/p)}_{q,p,\sharp}(0,L), \quad \eta_{*} \in B^{2(1-1/p)}_{q,p,\sharp}(0,L), \\
 {\varrho}^{0} \in W^{1,q}(\Omega_{\eta_0}), \quad \min {\varrho}_{0} > 0, \\
{\bu}_{0} \in B^{2(1-1/p)}_{q,p}(\Omega_{\eta_0}),\quad \vartheta_0 \in B^{2(1-1/p)}_{q,p}(\Omega_{\eta_0}),
\end{gather*}
with the compatibility conditions 
\begin{align*}
{\bu}_{0}  = 0 \mbox{ on } (0,L)\times \{0\}, \quad  &
{\bu}_{0}\left(x, \eta_0(x)\right) = \eta_{*} (x) e_{2} \quad x\in (0,L),\\
\nabla \vartheta_0\cdot\bfn_{\eta_0}\circ\bfvarphi_{\eta_0}^{-1}&=0 \mbox{ on }\partial\Omega_{\eta_0}, \quad \mbox{if}\quad \frac{1}{p}+\frac{1}{2q}<\frac{1}{2}.
\end{align*}
Then there exists $T>0$ such that the system \eqref{eq:2A}--\eqref{iniheat2A}
with
\eqref{eq:1A} replaced by
\begin{align*}
\partial_t^2\eta  - \partial_t\partial_x^2\eta+ \partial_x^4\eta&=g- \be_2^\intercal\bm{\tau}\circ\bm{\varphi}_\eta(-\partial_x\eta \be_1+ \be_2)\quad
\text{ for all }  (t,x)\in I\times (0,L)
\end{align*}
 admits a unique strong solutions satisfying 
\begin{align*}
 \varrho \in W^{1,p}&(0,T;W^{1,q}(\Omega_\eta)),\quad \bu \in  L^{p}(0,T;W^{2,q}(\Omega_\eta)) \cap W^{1,p}(0,T;L^{q}(\Omega_\eta)),\\
 &\vartheta \in L^{p}(0,T;W^{2,q}(\Omega_\eta)) \cap W^{1,p}(0,T;L^{q}(\Omega_\eta)),\\ \eta\in L^{p}&(0,T;W^{4,q}_\sharp(0,L)) \cap W^{1,p}(0,T;W^{2,q}_\sharp(0,L)) \cap W^{2,p}(0,T;L^{q}_\sharp(0,L)).
\end{align*}
Observe that a strong solution of the system \eqref{eq:1A}--\eqref{iniheat2A} satisfies the requirements \eqref{reg:assa}.

\subsection{Incompressible fluids}
We consider the problem
\begin{align}
\label{eq:1b}\partial_t^2\eta + \partial_x^4\eta&=g- \be_2^\intercal\bm{\tau}\circ\bm{\varphi}_\eta(-\partial_x\eta \be_1+ \be_2)\quad
&\text{ for all }  (t,x)\in I\times (0,L),\\
 \label{eq:2b}\partial_t \bu  + \Div\big(\mathbf{u}\otimes\bu\big)
 &= 
 \mu\Delx \bu -\nabx \pi+ \bff \quad&\text{ for all }(t,x,y)\in I\times\Omega_\eta,\\
 \Div \bu&=0&\text{ for all }(t,x,y)\in I \times\Omega_\eta,\\
\label{eq:3b}\bu\circ \bfvarphi_\eta &=\partial_t\eta (t,x)\be_2 \quad&\text{ for all } (t,x)\in I\times (0,L),\\
\bu (t,x,0)&=0 \quad&\text{ for all } (t,x)\in I\times (0,L),
\end{align}
where the Cauchy-stress is given by
$$\bftau=\mu\big(\nabx\bu+(\nabx\bu)^\intercal\big)-\pi\mathbb I_{2\times 2}.$$
The system is completed with the following initial conditions
\begin{equation}\label{iniheat1}
\eta(0,x)=\eta_0(x)\mbox{ and }\partial_t\eta(0,x)=\eta_*(x) \mbox{ for } x\in (0,L), 
\end{equation}
\begin{equation}\label{iniheat2}
  \bu (0,x,y)=\bu_0(x,y) \mbox{ for }(x,y)\in (0,L)\times (0,\eta_0(x)). 
\end{equation}
We speak about a strong solution to \eqref{eq:1b}--\eqref{eq:3b} if all quantities are $L^2$-function in space-time, i.e., if it holds
\begin{align}\label{eq:1306}
\begin{aligned}
\bu&\in L^2(I;W^{2,2}(\Omega_\eta))\cap W^{1,\infty}(I;L^2(\Omega_\eta)),\,\,
\pi\in L^2(I;W^{1,2}(\Omega_\eta)),\\
\eta&\in W^{2,\infty}(I;L^{2}_\sharp(0,L))\cap W^{1,\infty}(I;W^{2,2}_\sharp(0,L))\cap L^\infty(I;W^{4,2}_\sharp(0,L)).
\end{aligned}
\end{align}
The existence of a strong solution to \eqref{eq:1b}--\eqref{iniheat2} that satisfies \eqref{eq:1306} has been shown very recently in \cite{ScSu} under the assumption that no collision occurs. A corresponding result for visco-elastic plates (if the term $-\partial_t\partial_x^2\eta$ is added to the right-hand side of \eqref{eq:1b}) was shown before in \cite{GraHil}, where also the contact could be excluded.
As a special case of Theorem \ref{thm:main} (setting $\varrho=1$) we obtain now that also for perfectly elastic plates, there is no collision. Noticing that \eqref{eq:1306} implies that \eqref{reg:ass} holds we obtain the following unconditional result:
\begin{corollary}\label{cor:2}
Let $T>0$ be arbitrary. Suppose that the data satisfies\footnote{These are the assumptions from \cite{ScSu} ensuring the existence of a strong solution.}
\begin{equation*}
\eta_0\in W^{4,2}_\sharp(0,L), \quad \eta_{*}\in W^{2,2}_\sharp(0,L), \quad \bu_0\in W^{2,2}(\Omega_{\eta_0})
\end{equation*}
with the following compatibility conditions:
\begin{align*}
{\bu}_{0}  = 0 \mbox{ on } (0,L)\times \{0\}, \quad  &
{\bu}_{0}\left(x,  \eta_0(x)\right) = \eta_{*} (x) e_{2} \quad x\in (0,L),\\
\Div\bu_0=0 \mbox{ in } \Omega_{\eta_0}, &\quad 
\min \eta_0>0.
\end{align*}
There is a strong solution $(\eta,\bu,\pi)$ to \eqref{eq:1b}--\eqref{iniheat2} that satisfies \eqref{eq:1306} and there is no contact for $t<T$.
\end{corollary}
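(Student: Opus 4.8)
The plan is to combine the conditional strong-existence theorem of \cite{ScSu} with the a priori distance bound of Proposition~\ref{thm:dist} (specialised to $\varrho\equiv1$) by a continuation argument. First I would invoke \cite{ScSu}: under the stated compatibility conditions it furnishes a strong solution $(\eta,\bu,\pi)$ to \eqref{eq:1b}--\eqref{iniheat2} on a maximal interval $[0,T_\star)$ with $T_\star\in(0,+\infty]$, which satisfies \eqref{eq:1306} on every $[0,t]$, $t<T_\star$, and whose only possible obstruction to global continuation is a degeneration of the fluid domain, i.e.\ $T_\star<+\infty$ forces $\liminf_{t\uparrow T_\star}\min_{x\in(0,L)}\eta(t,x)=0$. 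Ruling this out for every finite candidate $T_\star$ is exactly the assertion that there is no contact for $t<T$.

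The intermediate --- and completely routine --- step is that, with $\varrho\equiv1$ and hence $\Div\bu\equiv0$, the regularity \eqref{eq:1306} implies the hypotheses \eqref{reg:ass} of Theorem~\ref{thm:main} on every $[0,t]$, $t<T_\star$. Indeed $\partial_t(\varrho\bu)=\partial_t\bu\in L^\infty(0,t;L^2(\Omega_\eta))\subset L^1(0,t;L^2(\Omega_\eta))$; $\varrho\bff=\bff\in L^2(0,t;L^2(\Omega_\eta))$ by \eqref{dataset} since $\Omega_{\eta(t)}$ is uniformly bounded; $\Div\bu\equiv0\in L^1(0,t;L^\infty(\Omega_\eta))$; and $\eta\in W^{1,\infty}(0,t;W^{2,2}_\sharp(0,L))\cap L^\infty(0,t;W^{4,2}_\sharp(0,L))\subset W^{1,2}(0,t;W^{1,2}_\sharp(0,L))\cap L^\infty(0,t;W^{3,2}_\sharp(0,L))$. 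The only term requiring an argument is $\Div(\varrho\bu\otimes\bu)=(\bu\cdot\nabla)\bu$: since $\bu\in L^2(0,t;W^{2,2}(\Omega_\eta))$ and in two space dimensions $W^{2,2}\hookrightarrow L^\infty$ and $W^{1,2}\hookrightarrow L^q$ for every $q<\infty$, one has $\|(\bu\cdot\nabla)\bu\|_{L^q(\Omega_\eta)}\lesssim\|\bu\|_{W^{2,2}(\Omega_\eta)}^2\in L^1(0,t)$, so $(\bu\cdot\nabla)\bu\in L^1(0,t;L^q(\Omega_\eta))$ for every $q<\infty$. Hence \eqref{reg:ass} holds (e.g.\ with $q=2>\tfrac{8}{5}$ and any $p'\in(\tfrac{8}{5},2)$), and Proposition~\ref{thm:dist} applies on each $[0,t]$, $t<T_\star$. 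Moreover, since $\Div\bu\equiv0$ the term $(\mathtt{VIII})$ in \eqref{eq:final} vanishes, and the Gronwall argument in the proof of Proposition~\ref{thm:dist} bounds $\int_0^L\eta^{-1}(t,\cdot)\dx$ by a quantity depending only on the data, on $t$, on the energy-controlled norms of $(\eta,\partial_t\eta,\bu)$, and on the \eqref{eq:1306}-norms $\|\eta\|_{L^\infty(0,t;W^{3,2}_\sharp)}$, $\|\partial_t\eta\|_{L^2(0,t;W^{1,2}_\sharp)}$ and $\|\partial_t\bu+(\bu\cdot\nabla)\bu-\bff\|_{L^1(0,t;L^{p'})}$.

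Then I would conclude by contradiction: suppose $T_\star<+\infty$. The standard energy estimate for \eqref{eq:1b}--\eqref{iniheat2} (the incompressible analogue of Theorem~\ref{thm:final}) keeps $\|\eta(t,\cdot)\|_{W^{2,2}_\sharp(0,L)}$ bounded on $[0,T_\star)$, so by \cite[Proposition~7]{GraHil} a degeneration $\liminf_{t\uparrow T_\star}\min_x\eta(t,x)=0$ forces $\limsup_{t\uparrow T_\star}\|\eta^{-1}(t,\cdot)\|_{L^1_\sharp(0,L)}=+\infty$. On the other hand, since in \cite{ScSu} this degeneration is the only obstruction, the \eqref{eq:1306}-norms of $(\eta,\bu)$ remain finite on all of $[0,T_\star)$; feeding them into the Gronwall estimate from the proof of Proposition~\ref{thm:dist} gives $\sup_{t<T_\star}\|\eta^{-1}(t,\cdot)\|_{L^1_\sharp(0,L)}<+\infty$, a contradiction. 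Hence $T_\star=+\infty$: the strong solution is global and no contact occurs, in particular for $t<T$. (Passing from the $L^1_\sharp$-bound on $\eta^{-1}$ to the $L^\infty_\sharp$-bound, i.e.\ to $\min_x\eta>0$, is done via \cite[Proposition~7]{GraHil}, exactly as at the end of the proof of Proposition~\ref{thm:dist}.)

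The main obstacle is the apparently circular interplay of the two ingredients: \cite{ScSu}'s strong solution requires the geometry not to degenerate in order to propagate \eqref{eq:1306}, while Proposition~\ref{thm:dist} requires \eqref{eq:1306} (equivalently \eqref{reg:ass}) in order to control the geometry. This is resolved --- and it is the one point where a careful reading of \cite{ScSu} is needed --- by the fact that the continuation criterion there is purely geometric: the strong solution, together with its \eqref{eq:1306}-bounds, persists on the whole maximal interval unless $\Omega_{\eta(t)}$ self-intersects, and the uniform distance bound of Proposition~\ref{thm:dist} excludes the latter, closing the bootstrap. Should \cite{ScSu} only supply a blow-up alternative of the form ``collision, or a Sobolev norm blows up'', the same conclusion follows by an additional bootstrap that exploits the (sufficiently tame) dependence of the bound in Proposition~\ref{thm:dist} on the strong norms.
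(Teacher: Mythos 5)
Your proposal is correct and follows essentially the same route as the paper: invoke \cite{ScSu} for a strong solution with regularity \eqref{eq:1306}, verify that \eqref{eq:1306} (together with $\varrho\equiv1$ and $\Div\bu\equiv0$) implies the conditional hypotheses \eqref{reg:ass}, and then apply the distance estimate of Theorem~\ref{thm:main}/Proposition~\ref{thm:dist} to exclude collision. The paper's own treatment of this corollary is essentially a one-liner (``Noticing that \eqref{eq:1306} implies that \eqref{reg:ass} holds we obtain the following unconditional result''), whereas you spell out the continuation argument needed to close the potential circularity between the conditional existence in \cite{ScSu} (which holds only up to collision) and the distance estimate (which needs the regularity that the existence theorem supplies); this is a useful elaboration of a point the paper leaves implicit, and your verification that $(\bu\cdot\nabla)\bu\in L^1(I;L^q(\Omega_\eta))$ for $q>8/5$ via the 2D embeddings $W^{2,2}\hookrightarrow L^\infty$, $W^{1,2}\hookrightarrow L^q$ is exactly what is needed.
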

\begin{remark}
In connection with Corollary \ref{cor:2} let us formulate three open problems:\footnote{They are clearly also open in the compressible case.}
\begin{enumerate}
\item Is it possible to obtain a conditional no-contact result in the 3D case? The existence of a global strong solution is clearly out of reach but one may hope for a distance estimate taking it for granted. However, the approach here is heavily based on the stream function and it is unclear how to argue in the 3D case.
\item Is it possible to obtain a version
of Corollary \ref{cor:2} for elastic shells, where the reference-geometry is not flat? In the case of a visco-elastic fluid the existence of a strong solution is shown \cite{Br}, but it is unclear if one can obtain an estimate as in Section \ref{sec:3}.
\item Is it possible to obtain a version
of Corollary \ref{cor:2} for inviscid fluids? For the estimate in Section \ref{sec:3} the viscosity of the fluid is crucial.
\end{enumerate}
\end{remark}

\section*{Acknowledgments}
\smallskip
\par\noindent
{\bf Funding}. This research was partly funded by:
\\
(i) Grant BR 4302/3-1 (525608987) by the German Research Foundation (DFG) within the framework of the priority research program SPP 2410 (D. Breit);\\
(ii) Grant BR 4302/5-1 (543675748) by the German Research Foundation (DFG) (D. Breit);\\
(iii) This research is supported by the Basque Government through the BERC 2022-2025 program and by the Spanish State Research Agency through BCAM Severo Ochoa excellence accreditation CEX2021-01142-S funded by MICIU/AEI/10.13039/501100011033 and through Grant PID2023-146764NB-I00 funded by MICIU/AEI/10.13039/501100011033 and cofunded by the European Union (A. Roy);\\
(iv) Grant RYC2022-036183-I funded by MICIU/AEI/10.13039/501100011033 and by ESF+ (A. Roy).

\section*{Compliance with Ethical Standards}
\smallskip
\par\noindent
{\bf Conflict of Interest}. The authors declare that they have no conflict of interest.

\smallskip
\par\noindent
{\bf Data Availability}. Data sharing is not applicable to this article as no datasets were generated or analysed during the current study.


\begin{thebibliography}{[M]}
%
%
%
%
%


\bibitem{Br} D. Breit, Regularity results in 2D fluid-structure interaction. {\em Math. Ann.}
388, 1495--1538. (2024)
%
%
%

\bibitem{BrSc} D. Breit and S. Schwarzacher, Compressible fluids interacting with a linear-elastic shell. {\em Arch. Rational Mech. Anal.}, 228, 495--562 (2018)
\bibitem{BrScF} D. Breit and S. Schwarzacher, {Navier--Stokes--Fourier fluids interacting with elastic shells.} {\em Ann. Sc. Norm. Super. Pisa Cl. Sci.}, (5)
Vol. XXIV, 619-690 (2023)

\bibitem{BGN} T. Bodn\'ar, G. Galdi and v. Ne\v{c}asov\'a, Fluid-Structure Interaction and Biomedical Applications, Springer, Basel. (2014)

\bibitem{boulakia2007existence}
M. Boulakia, Existence of weak solutions for the three-dimensional motion of
  an elastic structure in an incompressible fluid.
\newblock {\em J. Math. Fluid Mech.}, {9}(2), 262--294 (2007) 


\bibitem{sunny}
S. {\v{C}}ani{\'c},
\newblock {\em Moving boundary problems.}
\newblock Bulletin of the American Mathematical Society, 2020.

%

\bibitem{chambolle2005existence}
A. Chambolle, B. Desjardins, M. J. Esteban, and C. Grandmont, Existence of weak
  solutions for the unsteady interaction of a viscous fluid with an elastic
  plate.
\newblock {\em J. Math. Fluid Mech.}, {7}(3), 368--404 (2005)

\bibitem{chemetov2019weak}
N. V. Chemetov, B. Muha, and \v{S}. Ne\v{c}asov\'{a}, Weak-strong uniqueness for fluid-rigid body interaction problem with slip boundary condition. \newblock {\em J. Math. Phys.}, 60(1) (2019)



\bibitem{chen2006space}
Q. Chen and Z. Zhang, Space-time estimates in the Besov spaces and the Navier-Stokes equations.
\newblock {\em Methods Appl. Anal.}, 13(1), 107-122 (2006)

\bibitem{cheng2007navier}
C.H.A. Cheng, D. Coutand, and S. Shkoller, Navier-{S}tokes equations interacting
  with a nonlinear elastic biofluid shell.
\newblock {\em SIAM J. Math. Anal.}, {39}(3), 742--800 (2007)

\bibitem{CS}
C.H.A. Cheng and S. Shkoller, The interaction of the 3{D} {N}avier-{S}tokes
  equations with a moving nonlinear {K}oiter elastic shell.
\newblock {\em SIAM J. Math. Anal.}, {42}(3), 1094--1155 (2010)

\bibitem{Su} S. K. Chakrabarti: The theory and practice of hydrodynamics and vibration. River Edge, N.J. : World Scientific. (2002)

\bibitem{Do} W. Dowell: A Modern Course in Aeroelasticity. Fifth Revised and Enlarged Edition. Solid Mechanics and Its Applications 2017. Springer (2015)
%
 
%
%


%



\bibitem{grandmont2008existence}
C. Grandmont, Existence of weak solutions for the unsteady interaction of a
  viscous fluid with an elastic plate.
\newblock {\em SIAM J. Math. Anal.}, {40}(2), 716--737 (2008)

\bibitem{GraHil}
C. Grandmont and M. Hillairet, Existence of global strong solutions to a
  beam-fluid interaction system.
\newblock {\em Arch. Ration. Mech. Anal.}, {220}(3), 1283--1333 (2016)

\bibitem{GraHilLe} C. Grandmont, M. Hillairet, and J. Lequeurre,
Existence of local strong solutions to fluid-beam and fluid-rod interaction systems. 
{\em Ann. I. H. Poincar\'e}, 36, 1105--1149 (2019)
%

%
%
%

%
%



%
%
%

%
%


\bibitem{LeRu} D. Lengeler and M. \Ruzicka,
Weak Solutions for an incompressible {N}ewtonian fluid interacting with a {K}oiter type shell. 
{\em Arch. Rat. Mech. Anal.}, 211(1), 205--255 (2014)


\bibitem{Le} J. Lequeurre,
Existence of strong solutions to a fluid–structure system. 
{\em SIAM J. Math. Anal.}, 43(1), 389--410 (2011)

\bibitem{MMNRS} V. M\'{a}cha, B Muha, S. Ne\v{c}asov\'{a},
              A Roy, Arnab and S. Trifunovi\'{c}, Sr\dj an,
    Existence of a weak solution to a nonlinear fluid-structure interaction problem with heat exchange.
   Comm. PDE 47, 1591--1635, (2022)

\bibitem{DRR} D. Maity, J. P. Raymond, and A. Roy,
 Maximal-in-time existence and uniqueness of strong solution of a 3{D} fluid-structure interaction model. 
{\em  SIAM J. Math. Anal.}, 52, no. 6, 6338--6378 (2020)


\bibitem{MRT} D. Maity, A. Roy, and T. Takahashi, Existence of strong solutions for a system of interaction between a compressible viscous fluid and a wave equation. {\em Nonlinearity}, 34(4), 2659--2687 (2021).

\bibitem{MT} D. Maity and T. Takahashi, Existence and uniqueness of strong solutions for the system of interaction between a compressible Navier--Stokes--Fourier fluid and a damped plate equation. {\em Nonlinear Anal. Real World Appl.} 59, Paper No. 103267, 34 pp. (2021).
 
\bibitem{Mi} S. Mitra, Local existence of strong solutions of a fluid-structure
              interaction model.
   J. Math. Fluid Mech. 22, Paper No. 60, 38.
      (2020)
 

%


\bibitem{CanMuh13} B. Muha and S. \v{C}ani\'c, Existence of a weak solution to a nonlinear fluid-structure interaction problem modeling the flow of an incompressible, viscous fluid in a cylinder with deformable walls. {\em Arch. Rational Mech. Anal.}, 207 (3), 919--968 (2013)


%
\bibitem{MuSc} B. Muha and S. Schwarzacher, Existence and regularity for weak solutions for a fluid interacting with a non-linear shell in 3D. {\em Ann. I. H. Poincar\'e -- AN}, 39, 1369--1412. (2022)


\bibitem{raymond2014fluid}
J. P. Raymond and M. Vanninathan, A fluid-structure model coupling the
  {N}avier-{S}tokes equations and the {L}am\'{e} system.
\newblock {\em J. Math. Pures Appl.}, (9) {102}(3), 546--596 (2014) 

%
%
%


\bibitem{ScSu}
S. Schwarzacher and P. Su.
\newblock Existence of strong solutions for a perfect elastic beam interacting with Navier--Stokes equations.
\newblock {\em arXiv:2308.04253v2 }, (2023)



%
\bibitem{So} H. Sohr, \emph{The Navier-Stokes equations. An elementary functional analytic approach.}
Birkh\"auser Advanced Texts, Birkh\"auser Verlag, Basel. (2001)

%
%
%
\bibitem{TW} S. Trifunovi\'{c}, Y.-G. Wang,
On the Interaction Problem between a Compressible Viscous Fluid and a Nonlinear Thermoelastic Plate.
SIAM J. Math. Anal. 55, {3509--3566}. (2023)
%
%
%
%
%
\end{thebibliography}
\end{document}